\documentclass[preprint,12pt]{elsarticle}

\usepackage[utf8]{inputenc}
\usepackage{amsmath}
\usepackage{amsfonts}
\usepackage{amssymb}
\usepackage{amsthm}
\usepackage{enumitem} 
\usepackage{cite} 
\usepackage{bm}
\usepackage{xcolor}
\usepackage{fullpage}
\usepackage{comment}
\usepackage{pgfplots} 
\usepackage{tikz} 
\usetikzlibrary{positioning}
\usepgfplotslibrary{fillbetween} 
\definecolor{green_color_blind}{RGB}{102,194,165}
\definecolor{orange_color_blind}{RGB}{252,141,98}
\definecolor{blue_color_blind}{RGB}{141,160,203} 
\usepackage{algorithm}  
\usepackage{algpseudocode} 
\usepackage{url} 
\usepackage{cite} 

\usepackage[acronym]{glossaries} 
\newacronym{LMI}{LMI}{Linear Matrix Inequality}
\newacronym{QDF}{QDF}{Quadratic Difference Form}
\newacronym{LTI}{LTI}{Linear Time-Invariant}
\newacronym{ADMM}{ADMM}{Alternating Direction Method of Multipliers}
\newacronym{LPV}{LPV}{Linear Parameter-Varying}

\newcommand{\R}{\mathbb{R}}
\renewcommand{\S}{\mathbb{S}}
\DeclareMathOperator{\im}{im}
\DeclareMathOperator{\rank}{rank}
\newcommand{\lmin}{\ell_{\mathrm{min}}}
\newcommand{\nmin}{n_{\mathrm{min}}}
\newcommand{\true}{\ast}
\newcommand{\B}{\mathfrak{B}}
\newcommand{\Zp}{{\mathbb{Z}_+}}

\newcommand{\calH}{\mathcal{H}}

\newcommand{\Lmp}{\mathfrak{L}_{m,p}}
\newcommand{\Lmpcont}{\mathfrak{L}_{m,p}^{\mathrm{cont}}}
\newcommand{\LmpLcont}{\mathfrak{L}_{m,p,L}^{\mathrm{cont}}}

\makeatletter
\renewcommand*\env@matrix[1][*\c@MaxMatrixCols c]{%
  \hskip -\arraycolsep
  \let\@ifnextchar\new@ifnextchar
  \array{#1}}
\makeatother

\newcommand {\T}{\mathbb{T}} 	
\newcommand {\W}{\mathbb{W}} 	

\newtheorem{theorem}{Theorem}
\newtheorem{lemma}[theorem]{Lemma}

\newtheorem{definition}[theorem]{Definition}

\newtheorem{remark}[theorem]{Remark}

\newcommand{\rB}[2]{\B\!\!\mid_{[{#1},{#2}]}}
\newcommand{\rBtrue}[2]{\B_\true\!\!\mid_{[{#1},{#2}]}}

\definecolor{greenpigment}{rgb}{0.0, 0.65, 0.31}

\journal{Systems and Control Letters}

\begin{document}

\begin{frontmatter}

\title{From time series to dissipativity of linear systems \\ with dynamic supply rates}

\author[1]{H.J. van Waarde}\fnref{funding}
\affiliation[1]{organization={Bernoulli Institute for Mathematics, Computer Science and Artificial Intelligence, University of Groningen},
            country={The Netherlands},
            email={ (h.j.van.waarde@rug.nl)}}
\fntext[funding]{Henk van Waarde acknowledges financial support by the Dutch Research Council (NWO) under the Talent Programme Veni Agreement (VI.Veni.22.335). A. Padoan acknowledges the support of the Natural Sciences and Engineering Research Council of Canada (NSERC). Grant numbers: RGPIN-2025-06895 and DGECR-2025-00382.}
\author[2]{J. Coulson}
\affiliation[2]{organization={Department of Electrical and Computer Engineering, University
of Wisconsin-Madison},
            country={USA},
            email={ (jeremy.coulson@wisc.edu)}}
\author[3]{A. Padoan} 
\affiliation[3]{organization={Department of Electrical and Computer Engineering, University of British Columbia},
            country={Canada},
            email={ (alberto.padoan@ubc.ca)}}

\begin{abstract}
This paper studies the problem of verifying dissipativity of linear time-invariant (LTI) systems using input-output data. We leverage behavioral systems theory to express dissipativity in terms of quadratic difference forms (QDFs), allowing the study of general dynamic quadratic supply rates. We work under the assumptions that the data-generating system is controllable, and an upper bound is given on its lag. As our main results, we provide sufficient conditions for the data to be informative for dissipativity. We also show that for a specific class of static supply rates, these conditions are both necessary and sufficient. For the latter supply rates, it turns out that certification of dissipativity is only possible from data that enable unique system identification. As auxiliary results, we highlight some properties of QDFs, such as upper bounds on the degree of storage functions.
\end{abstract}



\begin{keyword}

dissipativity, behavioral systems theory, system identification, data-driven control

\end{keyword}

\end{frontmatter}



\section{Introduction}

\noindent 
Dissipativity has a long history in systems and control theory, originating from the seminal works~\citep{willems1972dissipative,willems1972dissipativeII}. At its core, it formalizes the principle that a physical system cannot store more energy than what was supplied to it in the past. Mathematically, dissipativity is characterized by a \textit{supply rate}, quantifying the external energy supplied to the system, a \textit{storage function}, representing the internal energy stored in the system, and a \textit{dissipation inequality}, ensuring that any excess energy is dissipated.
Beyond its physically motivated origins, dissipativity provides a bridge between Lyapunov stability theory and input-output analysis, making it a key tool for analyzing and designing complex interconnected dynamical systems. Thus, dissipation inequalities provide a systematic approach to stability analysis of interconnections of \textit{open} systems, often leading to stability conditions that can be verified via linear matrix inequalities (LMIs)~\citep{boyd1994linear}. Furthermore, dissipativity offers a unifying framework and powerful computational tools for convergence analysis, forming the foundation of modern robust control~\citep{zhou1996robust, megretski1997system, vinnicombe2001uncertainty} and optimization methods~\citep{lessard2016analysis, lessard2022analysis,scherer2023optimization, doerfler2024towards,Eising2024,Karakai2025}. Consequently, it plays a fundamental role in numerous theoretical developments and numerical methods, spanning optimal control~\citep{vinter2010optimal}, game theory~\citep{bacsar1998dynamic}, and physics~\citep{van2014port}, as well as model predictive control~\citep{camacho2013model} and integral quadratic constraints~\citep{megretski1997system}. Originally introduced for general nonlinear state-space systems~\citep{willems1972dissipative}, dissipativity has also been extensively studied in the context of linear time-invariant (LTI) systems, including its well-established role in passivity theory~\citep{willems1972dissipativeII} 
and network analysis~\citep{anderson1972network}, as well as its formulation in behavioral system theory~\citep{polderman1998introduction,willems2007behavioral}, where storage functions and supply rates are naturally expressed  by quadratic differential/difference forms (QDFs)~\citep{willems1998quadratic}. 

Dissipativity is usually verified (or enforced) through dissipation inequalities that assume prior knowledge of a system model~\citep{van1996l2}. However, explicit system models are often either unavailable or difficult to obtain, leaving measured data as the primary source of information \citep{DBLSCT2025}. This has recently motivated a growing interest towards data-driven approaches for verifying dissipativity properties, aiming to bypass the need for an explicit system description. These approaches, however, often impose restrictive structural assumptions on system complexity and fail to accommodate general dynamic supply rates, which are fundamental, e.g., in applications involving integral quadratic constraints. To address these limitations, we use the framework of behavioral systems theory~\citep{polderman1998introduction,willems2007behavioral} to obtain conditions under which dissipativity of LTI systems can be verified from input-output data. Specifically, under the assumption that the data originates from a controllable LTI system whose lag is upper bounded by a given constant, we use QDFs to express dissipation inequalities purely in terms of data matrices constructed from a measured trajectory, eliminating the need for system identification. Surprisingly, our results establish that for a class of supply rates, dissipativity can only be ascertained if the data are rich enough for unique system identification.


\subsection{Related Work}

Several recent works have addressed the problem of verifying dissipativity properties directly from data. 
A seminal work connecting data, Lyapunov stability, and dissipativity properties in the context of behavioral system theory is~\citep{maupong2017dissipativity}, which develops methods to infer a dissipativity property — specifically, $L$-dissipativity — and construct Lyapunov functions directly from data. Building on and extending this work, the papers~\citep{romer2019oneshot,Rosa2021} further study methods for verifying $L$-dissipativity of LTI systems using a single input-output trajectory, bypassing the need for an explicit system model. Our approach differs in that we establish dissipativity conditions for LTI systems, whereas $L$-dissipativity only guarantees that the dissipation inequality holds over a finite horizon. In general, $L$-dissipativity does not imply dissipativity.

The paper~\citep{koch2020verifying} takes a different approach to verifying dissipativity of LTI systems directly from data. The method accounts for noisy measurements but requires access to the state of the system. In~\citep{koch2021provably}, these results are extended to a framework for verifying dissipativity properties in the more challenging setting of noisy input-output data. The work~\citep{vanwaarde2022dis} further establishes necessary and sufficient conditions for inferring dissipativity properties from noisy input-state data, leveraging the matrix S-lemma developed in~\citep{vanwaarde2020noisy}. In contrast to these approaches, our method requires neither precise knowledge of the state-space dimension nor direct state measurements. Instead, it relies on conditions verifiable from input-output data, assuming only that the data-generating system is controllable and that an upper bound is given on its lag. 

In~\citep{padoan2023data}, the authors derive representation criteria for conical, convex, and affine models in behavioral systems theory, using data matrices to construct shift-invariant models that satisfy non-negativity constraints, such as dissipation inequalities expressed by complementarity relations. The paper~\citep{verhoek2024data} develops methods to verify dissipativity in LPV systems from a single input-scheduling-output trajectory, formulating a semidefinite program based on an LMI to assess $(Q,S,R)$-type dissipativity, while leveraging structural information like scheduling rate bounds.  


Closely related to our work is~\citep{camlibel2024}, which studies system identification from the perspective of data informativity \citep{vanWaarde2020di}. Specifically, this paper provides necessary and sufficient conditions under which a minimal LTI system can be uniquely identified from finite input-output data (modulo state-space transformations), assuming given bounds on its lag and state-space dimension. We build on these tools to establish informativity conditions for dissipativity of LTI systems expressed through QDFs. We note that QDFs have already been successfully applied to solve other problems in data-driven control, such as stabilization using input-output data, see \citep{van2023behavioral} and \citep{Wakaiki2026}.

Also relevant are the fundamental results of~\citep{trentelman1997every} and its discrete-time counterpart~\citep{kaneko2003}, which show that every storage function is necessarily a function of state. Drawing inspiration from these works, Theorem~\ref{theoremequivdiss} establishes a related result with a new proof that directly constructs a storage function of state from a QDF storage function.

\subsection{Contributions}

The paper introduces a new data-based condition for dissipativity of LTI systems. Our main result, Theorem~\ref{t:dddiss}, is two-fold. The first part of this theorem states sufficient conditions under which the dissipativity of an LTI system can be verified from input-output data, for general dynamic supply rates described by QDFs. These conditions involve i) a rank condition on a data Hankel matrix, which ensures that the data enable unique system identification \citep{camlibel2024}, and ii) a data-based linear matrix inequality that certifies dissipativity of the data-generating system. The second part of Theorem~\ref{t:dddiss} asserts that these two conditions are also \emph{necessary} for static supply rates satisfying an inertia condition. Examples of such supply rates include the well-known $\ell_2$-gain and passivity supply rates. Therefore, a key insight is that for these static supply rates, identifiability is necessary for certifying dissipativity from data. Beyond our main result, we derive additional findings of independent practical interest, including an upper bound on the degree of the QDF storage function (Theorem~\ref{t:degreestorage}) and conditions under which a QDF storage function is a quadratic function of the system state, also providing a constructive method for deriving such a function (Theorem~\ref{theoremequivdiss}). 

\subsection{Paper Organization}

Section~\ref{sec:preliminaries} introduces preliminary notions and results on dissipativity in discrete-time state-space systems. Section~\ref{sec:behaviors_QDFs_dissipativity} recalls key concepts from behavioral systems theory, presenting both known results on behaviors, QDFs, and dissipativity, as well as clarification on the connection between state-space and behavioral formulations of dissipativity. Section~\ref{sec:problem_formulation} formalizes the problem of data-driven dissipativity certification. Section~\ref{sec:main_result} presents our main theoretical contribution. Finally, Section~\ref{sec:conclusion} summarizes our findings and discusses future research directions.

\subsection{Notation}
The set of integers is denoted by $\mathbb{Z}$ and the subset of non-negative integers by $\Zp$. Given $i,j\in\mathbb{Z}$, with $i\leq j$, we use $[i, j]$ to denote the discrete interval $[i, j]\cap \mathbb{Z}$. By convention, $[i,j] = \varnothing$ if $i > j$.

The \emph{image} and \emph{kernel} of the matrix $M$ are denoted by $\im M$ and $\ker M$, respectively. The $n \times n$ \emph{identity matrix} is denoted by $I_n$ and the $n \times m$ \emph{zero matrix} by $0_{n,m}$. We also use the shorthand notation $0_n := 0_{n,1}$. We sometimes omit the subscripts when the sizes of these matrices are clear from the context. 
The set of $n \times n$ \emph{real symmetric matrices} is denoted by $\S^n$. Let $a,b$ and $c$ denote the number of negative, zero, and positive eigenvalues of a matrix $M \in \S^n$, respectively. We then say that $M$ has \emph{inertia} $(a,b,c)$. Moreover, we write $M \geq 0$ when $M$ is \emph{positive semidefinite}, and $M > 0$ when $M$ is \emph{positive definite}.

A \emph{void matrix} is a matrix with zero rows and/or zero columns. We denote by $0_{n,0}$ and $0_{0,m}$, respectively, the $n \times 0$ and $0 \times m$ void matrices. If $M$ and $N$ are, respectively, $p \times q$ and $q \times r$ matrices, $MN$ is a $p \times r$ void matrix if $p = 0$
or $r = 0$ and $MN = 0_{p,r}$ if $p, r \geq 1$ and $q = 0$. The rank of a void matrix is defined as zero. Moreover, by convention, the matrix $0_{0,0}$ is symmetric and positive semidefinite. Furthermore, $\S^0 = \{0_{0,0}\}$ and $\mathbb{R}^0 = \{0_{0}\}$. 

Given sets $X$ and $Y$, $Y^X$ denotes the set of all functions $f:X \to Y$. Given a function $w: \Zp\to\R^q$ and $i,j\in\Zp$ with $i\leq j$, we define the \emph{restriction} of $w$ to the interval $[i,j]$ as 
$$
{w_{[i,j]}  =   [w(i)^\top\;w(i+1)^\top\cdots \; w(j)^\top  ]^\top}.
$$
In case $i>j$ we say that $w_{[i,j]} = 0_{0}$, by convention. Finally, given positive integers $k$ and $T$ with $k \leq T$, and the vector $w_{[0,T-1]}$, we define the \emph{Hankel matrix} of depth $k$ as 
    $$
    \calH_k(w_{[0,T-1]}) := \begin{bmatrix}
    w(0) & w(1) & \cdots & w(T-k)\\
    w(1) & w(2) & \cdots & w(T-k+1) \\
    \vdots & \vdots & & \vdots \\
    w(k-1) & w(k) & \cdots & w(T-1)
    \end{bmatrix}.
    $$
    
\section{Preliminaries}\label{sec:preliminaries}

\subsection{State-space systems and related matrices}
\label{subsec:state-space systems}

Consider a discrete-time linear time-invariant system described by the equations
\begin{equation}
    \label{sys}
\begin{aligned}
x(t+1) &= A x(t) + B u(t), \\
y(t)   &= C x(t) + D u(t),
\end{aligned}
\end{equation}
where $x(t) \in \mathbb{R}^n$ is the state, $u(t) \in \mathbb{R}^m$ is the input, and $y(t) \in \mathbb{R}^p$ is the output for $t 
\in \Zp$. Here, $n \geq 0$, $m \geq 1$ and $p \geq 1$. Moreover, $A \in \mathbb{R}^{n \times n}$, $B \in \mathbb{R}^{n \times m}$, $C \in \mathbb{R}^{p \times n}$ and $D \in \mathbb{R}^{p \times m}$. For $k \geq 0$, we define the $k$-th \emph{observability matrix} as
$$
\mathcal{O}_k := \begin{cases}
    0_{0,n} & \text{if } k = 0 \\
    \begin{bmatrix}
        \mathcal{O}_{k-1} \\ CA^{k-1} 
    \end{bmatrix} & \text{if } k \geq 1.
\end{cases}
$$
We denote the smallest integer $k \geq 0$ such that $\rank \mathcal{O}_k = \rank \mathcal{O}_{k+1}$ by $\ell(C,A)$. This integer is called the \emph{lag} of the pair $(C,A)$. Note that $0 \leq \ell(C,A) \leq n$. If $n = 0$ then also $\ell(C,A) = 0$. We call the pair $(C,A)$ \emph{observable} if $\rank \mathcal{O}_{n} = n$. 

Similarly, for $k \geq 0$, we define the $k$-th \emph{controllability matrix} by 
$$
\mathcal{C}_k := \begin{cases}
    0_{n,0} & \text{if } k = 0 \\
    \begin{bmatrix}
        A^{k-1} B & \mathcal{C}_{k-1}
    \end{bmatrix}& \text{if } k \geq 1.
\end{cases}
$$
We call the pair $(A,B)$ \emph{controllable} if $\rank \mathcal{C}_n = n$. Finally, for $k \geq 0$, we define the $k$-th \emph{Toeplitz matrix of Markov parameters} by
$$
\mathcal{T}_k := \begin{cases}
    0_{0,0} & \text{if } k = 0 \\
    \begin{bmatrix}
        \mathcal{T}_{k-1} & 0 \\ C \mathcal{C}_{k-1} & D
    \end{bmatrix} & \text{if } k \geq 1.
\end{cases}
$$

\subsection{Dissipativity of discrete-time state-space systems}
\label{ssec:dissipativityiso}

The concept of \emph{dissipativity} plays a central role in this paper. For state-space systems of the form \eqref{sys}, this is defined as follows.  

\begin{definition}
\label{def:ssdissipativity} 
Let ${S\in \S^{m+p}}$. The system \eqref{sys} is said to be \emph{dissipative} with respect to the {\em supply rate}  
\begin{equation} 
\label{supply}
s(u,y)=\begin{bmatrix}  u\\y\end{bmatrix}^\top S \begin{bmatrix} u\\y\end{bmatrix},
\end{equation} 
if there exists a positive semidefinite matrix $P\in\S^{n}$ so that 
\begin{equation} 
\label{storage} 
V(x)  := x^{\top} P x
\end{equation} 
satisfies
\begin{equation}
\label{dispineq}
  V(x(t+1)) - V(x(t)) \leq s(u(t),y(t))
\end{equation} 
for all $t \in \Zp$ and all input-state-output trajectories $(u,x,y): \Zp \to \mathbb{R}^{m+n+p}$ of system \eqref{sys}.
\end{definition}

\noindent 
The function $V$ in \eqref{storage} is called a \emph{storage function}, and the inequality \eqref{dispineq} is known as the \emph{dissipation inequality}, see \citep{willems1972dissipative}. It indicates that the energy stored at time $t+1$ cannot exceed the sum of the energy stored at time $t$ and the energy $s(u(t),y(t))$ supplied externally. As a result, internal ``creation of energy'' is precluded, allowing only for dissipation of energy. Well-known special cases \citep{van2000l2} of dissipativity include \emph{passivity}, which corresponds to the case that $m = p$ and 
\begin{equation}\label{eq:passive}
S = \begin{bmatrix}
    0 & I_m \\ 
    I_m & 0
\end{bmatrix},
\end{equation}
and \emph{finite $\ell_2$-gain} (of at most $\gamma \geq 0$), in which 
\begin{equation}\label{eq:L2gain}
S = \begin{bmatrix}
    \gamma^2 I_m & 0 \\ 0 & -I_p
\end{bmatrix}.
\end{equation}

\section{Behaviors, quadratic difference forms, and dissipativity} \label{sec:behaviors_QDFs_dissipativity}

The focus of this paper is on directly inferring dissipativity properties from data, integrating principles and tools from behavioral systems theory  \citep{willems1998quadratic,kaneko2003}. This requires key concepts from behavioral systems theory, such as the notions of behavior, quadratic difference forms, and a behavioral analogue of the notion of dissipativity, which we introduce in the following subsections.

\subsection{Discrete-time LTI dynamical systems}

In the language of behavioral systems theory, a \textit{dynamical system} (or, briefly, a \textit{system}) is a triple $(\T,\W,\B),$ where $\T$ is the \textit{time axis}, $\W$ is the \textit{signal set}, and $\B \subseteq (\W)^{\T}$ is the \textit{behavior} of the dynamical system. Throughout the paper, we focus on discrete-time LTI systems. In particular, we consider systems for which the time axis is ${\T=\Zp}$, the signal set is the linear space ${\W=\mathbb{R}^q},$ with $q = m+p$, and the (input-output) behavior $\B$ of the system is defined as  
\begin{equation}\label{behavior}
\B := \{ w = (u,y) \in (\mathbb{R}^q)^\Zp \mid \exists \, x \in (\mathbb{R}^n)^\Zp \text{ such that } \eqref{sys} \text{ holds for all } t \in \Zp \}.
\end{equation}
\noindent
Note that, throughout the paper, we consider a fixed partition of the variables $w$ into inputs $u$ and outputs $y$.

Given a behavior $\B$ of the form \eqref{behavior}, we say that \eqref{sys} (or simply, $(A,B,C,D)$) is a \emph{state-space representation} of $\B$. Clearly, $\B$ has many different state-space representations. We say a state-space representation $(A,B,C,D)$ of $\B$ is \emph{minimal} if its state-space dimension is smaller than or equal to the state-space dimension of any other state-space representation of $\B$. It can be shown that a minimal state-space representation $(A,B,C,D)$ is always \emph{observable}, i.e., the pair $(C,A)$ is observable (but $(A,B)$ need not be controllable) \citep{willems1986}. We denote the state-space dimension of any minimal state-space representation of $\B$ by $n(\B)$ and omit the dependence on $\B$ whenever it is clear from the context. The state-space dimension is an \textit{invariant} of the system, that is, it does not depend on the choice of minimal state-space representation. The \emph{lag} $\ell(\B)$ of the behavior $\B$ is defined as $\ell(C,A)$, where $(A,B,C,D)$ is any state-space representation of $\B$. It can be shown that the lag is also an invariant of the system, which does not depend on the particular choice of state-space representation. When no confusion can arise, we simply write $\ell$ instead of $\ell(\B)$. 

The following definition of controllability is adapted from \citep{Willems1991} to account for the time axis $\mathbb{T} = \Zp$. Specifically, the behavior $\B$ is called \emph{controllable}  if for all $w_1,w_2 \in \B$ and all $t_1 \in \mathbb{Z}_+$, there exists $w \in \B$ and $t_2 \geq t_1$ such that 
$$
w(t) = \begin{cases} w_1(t) & \text{ for } t \in [0,t_1] \\ w_2(t-t_2) & \text{ for } t \geq t_2. \end{cases}
$$
If $(A,B,C,D)$ is a minimal state-space representation of $\B$, then $\B$ is controllable if and only if the pair $(A,B)$ is controllable in the sense of Kalman. 

Finally, two state-space representations $(A,B,C,D)$ and $(A',B',C',D')$ are called \emph{isomorphic} if there exists a nonsingular matrix $T$ such that $A = TA'T^{-1}$, $B = TB'$, $C = C' T^{-1}$ and $D = D'$. The transformation associated with $T$ is referred to as a \textit{similarity transformation}. Note that isomorphic state-space representations give rise to the same input-output behavior.

\subsection{Quadratic difference forms}

We now review basic material on quadratic difference forms (QDFs) \citep{willems1998quadratic,kaneko2003}, which play a key role in defining the notion of dissipativity in the framework of behavioral systems theory.

Let $M \geq -1$ and $q \geq 1$ be integers. Define the matrix $\Psi \in \mathbb{S}^{(M+1)q}$ by 
$$
\Psi := \begin{cases}
    0_{0,0} & \text{if } M = -1 \\ 
    \begin{bmatrix}
\Psi_{0,0} & \cdots & \Psi_{0,M} \\
\vdots & \ddots & \vdots \\
\Psi_{M,0} & \cdots &\Psi_{M,M}
\end{bmatrix} & \text{if } M \geq 0,
\end{cases}
$$
where $\Psi_{i,j} \in \mathbb{R}^{q \times q}$ is such that $\Psi_{i,j} = \Psi_{j,i}^\top$ for $i,j \in [0,M]$. 

\begin{definition}
The quadratic difference form (QDF) associated with the matrix $\Psi \in \S^{(M + 1)q}$ is the operator 
  $Q_{\Psi}: (\mathbb{R}^q)^{\Zp} \to (\mathbb{R})^{\Zp}$
defined by
\begin{equation}  \label{ch0:eq:QDF} 
Q_{\Psi}(w)(t) := \begin{cases}
    0 & \text{if } M = -1 \\ 
    \sum_{i,j=0}^{M} w(t + i)^\top \Psi_{i,j} ~w(t +j) & \text{otherwise}.
\end{cases}
\end{equation}
\end{definition}

\noindent 
The QDF $Q_\Psi$ can be written in terms of $\Psi$ as 
\[
Q_{\Psi}(w)(t) = 
w_{[t,t+M]}^\top \Psi w_{[t,t+M]}.
\]
If $M = -1$, we define the {\em degree} of $Q_{\Psi}$ as $-1$. If $M \geq 0$, the degree of $Q_\Psi$ is defined as the smallest integer $d \geq -1$ such that $\Psi_{i,j} = 0$ for all $i > d$ and all $j$. The degree of $Q_{\Psi}$ is denoted by $\deg(Q_{\Psi})$. The matrix $\Psi$ is called a \textit{coefficient matrix} of the QDF. Note that a given QDF does not determine the coefficient matrix uniquely. However, if the degree of the QDF is $d$, it has a coefficient matrix $\Psi$ of size $(d + 1)q \times (d+1)q$. In this case, we call $\Psi$ a \emph{minimal} coefficient matrix of $Q_\Psi$. A QDF $Q_{\Psi}$ is referred to as \textit{static} if $\deg(Q_{\Psi}) = 0$, and \textit{dynamic} if $\deg(Q_\Psi) > 0$.

The QDF $Q_\Psi$ is called \emph{nonnegative} if $Q_{\Psi}(w) \geq 0$ for all $w: \Zp \rightarrow \mathbb{R}^q$. We denote this as $Q_{\Psi} \geq 0$. Clearly, this holds if and only if $\Psi \geq 0$. 
Likewise, we define \emph{nonpositivity}, which is denoted as $Q_\Psi \leq 0$.

For a given QDF $Q_\Psi$, its {\em rate of change} along a given $w: \Zp\rightarrow \mathbb{R}^q$ is given by
$$Q_\Psi(w)(t+1) - Q_\Psi(w)(t).
$$
The rate of change turns out to be a QDF itself. Indeed, by defining the matrix 
$\nabla \Psi \in \S^{(M + 2)q}$ by
\begin{equation} \label{ch0:eq:nabla phi}
\nabla \Psi := \begin{bmatrix} 0_{q,q} & 0 \\
                               0  &  \Psi \end{bmatrix} -
                     \begin{bmatrix} \Psi & 0 \\
                               0  &  0_{q,q} \end{bmatrix},                              
\end{equation}
it is easily verified that 
\[
Q_{\nabla \Psi}(w)(t) = Q_\Psi(w)(t+1) - Q_\Psi(w)(t)
\]
for all $w: \Zp \rightarrow \mathbb{R}^q$ and $t \in \Zp$.

QDFs are particularly relevant in combination with behaviors defined by LTI systems. Let $\B$ be the behavior defined in \eqref{behavior}. The QDF $Q_\Psi$ is called \emph{nonnegative on $\B$} if $Q_{\Psi}(w) \geq 0$ for all $w \in \B$. We denote this as $Q_\Psi \geq 0$ on $\B$. Likewise, we define \emph{nonpositivity} on $\B$ which is denoted by $Q_\Psi \leq 0$ on $\B$. For two given QDFs $Q_{\Psi_1}$ and $Q_{\Psi_2}$ we sometimes use the notation $Q_{\Psi_1} \leq Q_{\Psi_2}$ on $\B$, meaning that $Q_{\Psi_1} - Q_{\Psi_2} \leq 0$ on $\B$. Finally, $Q_{\Psi_1}$ and $Q_{\Psi_2}$ are called {\em $\B$-equivalent} if $Q_{\Psi_1}(w) = Q_{\Psi_2}(w)$ for all $w \in \B$.

\subsection{Dissipativity using quadratic difference forms}

\noindent
In this section, we define and characterize a behavioral notion of dissipativity. As before, throughout this section we consider the input-output behavior $\B$ as in \eqref{behavior}. In addition, consider a matrix $\Phi \in \S^{(M+1)q}$ defining the QDF $Q_\Phi$. We will refer to $Q_\Phi$ as the \emph{supply rate}. Note that the supply rate is in general dynamic, that is, it depends on the signal $w$ over a time window of length $\deg(Q_{\Phi})+1$.   
\begin{definition}
\label{def:behavioraldissipativity}
The behavior $\B$ is called \emph{dissipative with respect to the supply rate $Q_\Phi$} if there exists a quadratic difference form $Q_\Psi$ such that 
\begin{equation}
\label{dissipationineq}
Q_\Psi \geq 0 \text{ on } \B \text{ and } Q_{\nabla \Psi} \leq Q_\Phi \text{ on } \B.
\end{equation}
\end{definition}

A quadratic difference form $Q_\Psi$ satisfying \eqref{dissipationineq} is called a \emph{storage function} for the pair ($\B,Q_\Phi)$. In principle, dissipativity does not require positive semidefiniteness of the coefficient matrix $\Psi$ of the storage function (only nonnegativity of $Q_\Psi$ on $\B$ is required). Nonetheless, it turns out that we can take $\Psi \geq 0$ without loss of generality, as made precise in the next lemma.  

\begin{lemma}
\label{l:psdcoefficient}
The behavior $\B$ is dissipative with respect to the supply rate $Q_\Phi$ if and only if there exists a symmetric matrix $\Psi \geq 0$ such that
\begin{equation}\label{eq:dissipation_nablapsi}
 Q_{\nabla \Psi} \leq Q_\Phi \text{ on } \B.
\end{equation}
\end{lemma}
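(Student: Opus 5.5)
The proof will handle the two implications separately. The ``if'' direction is immediate. The ``only if'' direction will replace an arbitrary storage function by a $\B$-equivalent one whose coefficient matrix is positive semidefinite. The tools are shift-invariance of $\B$ and an orthogonal projection onto the finite-window restriction of $\B$.

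\emph{If.} Suppose $\Psi \geq 0$ satisfies \eqref{eq:dissipation_nablapsi}. Then $Q_\Psi(w)(t) = w_{[t,t+M]}^\top \Psi w_{[t,t+M]} \geq 0$ for every $w$, so in particular $Q_\Psi \geq 0$ on $\B$. Hence \eqref{dissipationineq} holds and $\B$ is dissipative.

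\emph{Only if.} Let $Q_\Psi$, with $\Psi \in \S^{(M+1)q}$, be a storage function satisfying \eqref{dissipationineq}. Define the subspace
\[
\mathcal{V} := \{ w_{[0,M]} \mid w \in \B \} \subseteq \R^{(M+1)q},
\]
which is a linear subspace because $\B$ is a linear space.

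\emph{Step 1 (shift-invariance).} For $w \in \B$ and $t \in \Zp$, the shifted signal $w(\cdot + t)$ is again in $\B$: if $x$ is a state trajectory for $w$, then $x(\cdot+t)$ is one for the shift. Therefore $w_{[t,t+M]} \in \mathcal{V}$ for all $t$. Conversely, every $v \in \mathcal{V}$ is realized as such a window. So nonnegativity of $Q_\Psi$ on $\B$ is equivalent to $v^\top \Psi v \geq 0$ for all $v \in \mathcal{V}$.

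\emph{Step 2 (projection).} Let $\Pi$ be the orthogonal projector onto $\mathcal{V}$ and set $\Psi' := \Pi \Psi \Pi$. This matrix is symmetric. It is positive semidefinite, since for any $z$
\[
z^\top \Psi' z = (\Pi z)^\top \Psi (\Pi z) \geq 0,
\]
because $\Pi z \in \mathcal{V}$. Moreover, $\Pi v = v$ for $v \in \mathcal{V}$. Combined with Step 1, this gives $Q_{\Psi'}(w)(t) = Q_\Psi(w)(t)$ for all $w \in \B$ and $t \in \Zp$, so $Q_{\Psi'}$ and $Q_\Psi$ are $\B$-equivalent.

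\emph{Step 3 (transfer of the dissipation inequality).} For $w \in \B$,
\[
Q_{\nabla\Psi'}(w)(t) = Q_{\Psi'}(w)(t+1) - Q_{\Psi'}(w)(t) = Q_{\Psi}(w)(t+1) - Q_{\Psi}(w)(t) = Q_{\nabla\Psi}(w)(t) \leq Q_\Phi(w)(t).
\]
Hence $\Psi' \geq 0$ satisfies \eqref{eq:dissipation_nablapsi}.

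The only point needing care is Step 1: the windows $w_{[t,t+M]}$ must range over the same subspace for every $t$, so that a single projector works uniformly in time. This is exactly what shift-invariance of $\B$ on $\Zp$ provides.
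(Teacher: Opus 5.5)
Your proof is correct and follows the same route as the paper. Time-invariance places every window $w_{[t,t+M]}$, $w\in\B$, in the single subspace $\rB{0}{M}$, and $\Psi$ is then replaced by a positive semidefinite matrix that agrees with it on that subspace, so $Q_\Psi$ and $Q_{\nabla\Psi}$ are unchanged along $\B$. Your compression $\Pi\Psi\Pi$ is a particular instance of the paper's $K^\top K$: if the basis matrix $G$ has orthonormal columns (so $\Pi = GG^\top$) and $G^\top\Psi G = F^\top F$, then $K := FG^\top$ satisfies $KG = F$ and $K^\top K = \Pi\Psi\Pi$.
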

For the proof, we need the notion of \emph{restricted behavior}. Given $t_0,t_1 \in \Zp$, with $t_0 \leq t_1$, the restricted behavior is defined as
$$
\rB{t_0}{t_1} \: := \left\{
w_{[t_0,t_1]} \in \mathbb{R}^{(t_1-t_0+1)q} \mid w \in \B \right\}.
$$
Obviously, the restricted behavior is a subspace of $\mathbb{R}^{(t_1-t_0+1)q}$. 

\begin{proof}[Proof of Lemma~\ref{l:psdcoefficient}]
The `if' part is clear by noting that $\Psi \geq 0$ implies that $Q_\Psi \geq 0$ on $\B$. To prove the `only if'  part, assume that $\B$ is dissipative with respect to $Q_\Phi$. Let $Q_{\Psi'}$ be a storage function for $(\B,Q_\Phi)$ and denote its degree by $d$. Let the columns of the matrix $G$ form a basis for $\rB{0}{d}$. Note that by time-invariance, $\rB{t}{t+d} \subseteq \rB{0}{d}$ for all $t \in \Zp$ \citep{willems1986}. As such, $Q_{\Psi'} \geq 0$ on $\B$ implies that
$$
G^\top \Psi' G \geq 0,
$$
meaning that $G^\top \Psi' G = F^\top F$ for some real matrix $F$. 
Since $\ker G \subseteq \ker F$, we have that $\im F^\top \subseteq \im G^\top$, i.e., $F^\top = G^\top K^\top$ for some matrix $K$. This implies that $G^\top \Psi' G = G^\top K^\top K G$. Now define $\Psi := K^\top K \geq 0$. By construction, $Q_{\Psi'}$ and $Q_\Psi$ are $\B$-equivalent. Therefore, $Q_{\nabla \Psi} \leq Q_\Phi$ on $\B$. This proves the lemma. 
\end{proof}

So far, we have not specified the degree of the storage function, which could be large in practice. Nevertheless, we can prove bounds on the degree of the storage function $Q_\Psi$ in terms of the degree of the supply rate $Q_\Phi$ and the lag of $\B$. 

\begin{theorem}
\label{t:degreestorage} Consider a behavior $\B$ of the form \eqref{behavior}. Assume that $\B$ is dissipative with respect to $Q_\Phi$. Then there exists a storage function for $(\B,Q_\Phi)$ of degree less than 
$$
\max\{\deg(Q_\Phi),\ell(\B)\}.
$$
\end{theorem}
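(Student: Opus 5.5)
Throughout, set $L := \max\{\deg(Q_\Phi),\ell(\B)\}$. The plan is to start from an arbitrary storage function and compress it into a storage function on the window $[t,t+L-1]$, that is, one of degree at most $L-1$, which is less than $L$ as required. By Lemma~\ref{l:psdcoefficient}, I take a storage function $Q_{\Psi'}$ with $\Psi' \geq 0$, of some degree $d$. If $d \leq L-1$ there is nothing to prove, so assume $d \geq L$.

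\emph{Construction.} For $\xi \in \rB{0}{L-1}$ I would define
$$
S(\xi) := \inf\{\, Q_{\Psi'}(v)(0) \mid v \in \B,\ v_{[0,L-1]} = \xi \,\}.
$$
Fix a minimal (hence observable) representation $(A,B,C,D)$. Since $L \geq \ell$, the window $\xi$ determines $x(0)$ through $\mathcal{O}_L$ and $\mathcal{T}_L$, and therefore also determines $x(L)$. Every $v$ with $v_{[0,L-1]}=\xi$ is then parametrized by the free inputs $u(L),\dots,u(d)$. Consequently $Q_{\Psi'}(v)(0)$ is a quadratic form in $(\xi, u_{[L,d]})$ that is nonnegative on this subspace. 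For a nonnegative quadratic form, partial minimization over some of the variables is attained, for instance by a pseudoinverse/Schur-complement formula. Hence $S$ is a nonnegative quadratic form on $\rB{0}{L-1}$, and the infimum is a minimum. I then extend $S$ to a matrix $\Psi \geq 0$ on $\mathbb{R}^{Lq}$, using the same factorization trick as in the proof of Lemma~\ref{l:psdcoefficient}. This gives a QDF $Q_\Psi$ of degree at most $L-1$ with $Q_\Psi(w)(t) = S(w_{[t,t+L-1]})$ for all $w\in\B$, where I use $\rB{t}{t+L-1}\subseteq \rB{0}{L-1}$ by time-invariance. In the case $L=0$, $S$ is the minimum of $Q_{\Psi'}$ over all of $\B$, which is $0$, so $\Psi$ is the void matrix $0_{0,0}$.

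\emph{Dissipation inequality.} Fix $w\in\B$ and $t\in\Zp$, and let $v\in\B$ be a minimizer for $S(w_{[t+1,t+L]})$. The key step is a concatenation property: there is $w'\in\B$ with $w'(s)=w(s)$ for $s\le t+L$ and $w'(s)=v(s-t-1)$ for $s\ge t+1$. This holds because the two signals agree on a window of length $L\ge\ell$, so they share the state at time $t+1$ (observability), and the past of $w$ can be glued to the future of $v$ through that common state. I then chain the following facts:
\begin{itemize}
\item $S(w_{[t+1,t+L]}) = Q_{\Psi'}(w')(t+1)$, by the choice of $v$ and time-invariance;
\item $Q_{\Psi'}(w')(t+1) \le Q_{\Psi'}(w')(t) + Q_\Phi(w')(t)$, by the dissipation inequality for $Q_{\Psi'}$;
\item $Q_\Phi(w')(t)=Q_\Phi(w)(t)$, since $\deg(Q_\Phi)\le L$ means this quantity only involves the window $[t,t+L]$, where $w'=w$;
\item $Q_{\Psi'}(w')(t)\ge S(w_{[t,t+L-1]})$, since the shift of $w'$ by $t$ is admissible in the infimum defining $S(w_{[t,t+L-1]})$.
\end{itemize}
Combining these gives $Q_{\nabla\Psi}(w)(t)\le Q_\Phi(w)(t)$. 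Together with $\Psi\ge 0$, this shows that $Q_\Psi$ is a storage function for $(\B,Q_\Phi)$ of degree less than $L$.

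\emph{Main obstacles.} I expect the main work to lie in two places. The first is proving the concatenation property cleanly for the time axis $\Zp$ with overlap length $\ge\ell$, including the shift bookkeeping. The second is verifying that the partial infimum of a form that is nonnegative only on a subspace is attained and is quadratic. For the latter, I would first reduce to coordinates $(\xi,u_{[L,d]})$ on $\rB{0}{d}$, in which the form becomes a positive semidefinite matrix, and then apply the standard result $\min_z \begin{bmatrix}\xi\\z\end{bmatrix}^\top\! \begin{bmatrix}X&Y\\Y^\top&Z\end{bmatrix}\! \begin{bmatrix}\xi\\z\end{bmatrix} = \xi^\top (X-YZ^{\dagger}Y^\top)\xi$, which is valid because $\im Y^\top\subseteq\im Z$ for a positive semidefinite block matrix.
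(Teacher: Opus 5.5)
Your construction of $S$, its positive semidefinite extension, and the concatenation step are fine, but the four bullets do not combine into the dissipation inequality. Bullets 1--3 give $S(w_{[t+1,t+L]})\le Q_{\Psi'}(w')(t)+Q_\Phi(w)(t)$. To conclude, you need the \emph{upper} bound $Q_{\Psi'}(w')(t)\le S(w_{[t,t+L-1]})$, but bullet 4 supplies the reverse inequality. What you actually obtain is $Q_{\nabla\Psi}(w)(t)\le Q_\Phi(w)(t)+\bigl(Q_{\Psi'}(w')(t)-S(w_{[t,t+L-1]})\bigr)$, with a nonnegative error term. The case $L=0$ makes this visible: there $S\equiv 0$, and with the minimizer $v=0$ your chain yields only $Q_\Phi(w)(t)\ge -w(t)^\top\Psi'_{0,0}w(t)$, not $Q_\Phi(w)(t)\ge 0$. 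The natural repair for an infimum construction also fails. If you start from a minimizer $v^\ast$ for the \emph{old} window $w_{[t,t+L-1]}$ and push it forward one step, $v^\ast$ chooses its own free input at time $L$, so in general $v^\ast(L)\neq w(t+L)$. Then both $Q_\Phi(v^\ast)(0)$ and the new window $v^\ast_{[1,L]}$ differ from those of $w$, and you get the inequality only along minimizing continuations, not for every $w\in\B$. Replacing $\inf$ by $\sup$ would make your chain run in the right direction. However, $Q_{\Psi'}(v)(0)$ is a convex quadratic in the free inputs $u(L),\dots,u(d)$, so that supremum is finite only if it does not depend on them.

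The missing idea is exactly this independence: for $v\in\B$, $Q_{\Psi'}(v)(0)$ depends only on $v_{[0,L-1]}$. Once that is known, your infimum is over a constant, $S$ coincides with $Q_{\Psi'}$ on $\B$, and the construction is unnecessary. The paper proves it by degree reduction. Suppose $d\ge\max\{\deg(Q_\Phi),\ell\}$, and test the dissipation inequality on a trajectory with $w_{[0,d]}=0$ and arbitrary $u(d+1)$. Then $x(d+1)=0$ and $Q_{\Psi'}(w)(0)=Q_\Phi(w)(0)=0$, so $Q_{\Psi'}(w)(1)=u(d+1)^\top\begin{bmatrix}I\\D\end{bmatrix}^\top\Psi'_{d,d}\begin{bmatrix}I\\D\end{bmatrix}u(d+1)\le 0$. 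Together with $\Psi'\ge 0$, this forces $\Psi'_{i,d}\begin{bmatrix}I\\D\end{bmatrix}=0$ for all $i\in[0,d]$. Next write $w(t+d)=\begin{bmatrix}I\\D\end{bmatrix}u(t+d)+\begin{bmatrix}0\\C\end{bmatrix}Z_2w_{[t+d-\ell,t+d-1]}$ via Lemma~\ref{l:Z1Z2}. This eliminates the last block and gives a $\B$-equivalent QDF of degree at most $d-1$. One then replaces its coefficient matrix by a positive semidefinite $\B$-equivalent one using Lemma~\ref{l:psdcoefficient} and iterates. No infimum, Schur complement, or concatenation argument is needed.
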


\noindent 
The proof of Theorem~\ref{t:degreestorage} is provided in \ref{sec:proofdegreestorage}.

\subsection{Relation between state-space and behavioral dissipativity notions}

In this subsection we relate the behavioral notion of dissipativity of Definition~\ref{def:behavioraldissipativity} with the notion of dissipativity of state-space systems in Definition~\ref{def:ssdissipativity}.  

Let $\B$ be the input-output behavior as defined in \eqref{behavior}. Consider the (static) supply rate \eqref{supply}, with $S \in \mathbb{S}^q$, and the QDF $Q_S$.  Note that we have
$$
Q_S(w)(t) = s(u(t),y(t)).
$$ 
The question we address is:  how does dissipativity of the state-space system \eqref{sys} relate to dissipativity of its corresponding input-output behavior $\B$? 
We emphasize that the ``state-space'' notion of dissipativity is defined in terms of a storage function  $V(x) =  x^\top P x$ that is a quadratic function of the \emph{internal} (state) variables of system~\eqref{sys}. On the other hand, the ``behavioral'' notion of dissipativity of $\B$ is defined in terms of storage function that is a QDF of \emph{external} (input-output) variables. 
The question posed above is thus closely connected to another important question: when is a QDF storage function a (quadratic) function of the state? This question has been extensively studied and answered in the seminal works~\citep{trentelman1997,kaneko2003}, which lay the groundwork for our next theorem. While our result may be derived from those of \citep{kaneko2003}, we present an alternative (constructive) proof, presented in \ref{sec:proofstatestorage}. The proof not only clarifies the relationship between the ``state-space'' and ``behavioral'' notions of dissipativity, but also provides a formula to explicitly construct a positive semidefinite matrix $P$, defining a storage function of state, given a QDF storage function.

\begin{theorem}
\label{theoremequivdiss} 
Consider the behavior $\B$ in \eqref{behavior} and suppose that \eqref{sys} is a minimal state-space representation of $\B$. Then \eqref{sys} is dissipative with respect to the supply rate $s(u,y)$ if and only if $\B$ is dissipative with respect to $Q_S$.
Moreover, if $Q_\Psi$ is a storage function for the pair $(\B,Q_S)$, where $\Psi \in \mathbb{S}^{qd}$ with $d\in\Zp$  and $\Psi \geq 0$, then it admits the form
\begin{equation}
    \label{QPsiandP}
Q_\Psi(w)(t) = x(t)^\top P x(t),
\end{equation}
for all $w\in\B$, where $x:\Zp \to \R^n$ is the state trajectory of \eqref{sys} corresponding to $w$, and $P \in \S^n$ is defined as
\begin{equation}
    \label{P} 
P := \begin{bmatrix}
    0 \\ \mathcal{O}_d
\end{bmatrix}^\top \Pi^\top \Psi \Pi \begin{bmatrix}
    0 \\ \mathcal{O}_d
\end{bmatrix} \geq 0.
\end{equation}
Here $\Pi \in \R^{qd \times qd}$ is the permutation matrix such that 
\begin{equation}
    \label{permutation}
 w_{[t,t+d-1]} = \Pi \begin{bmatrix}
    u_{[t,t+d-1]} \\ y_{[t,t+d-1]}
\end{bmatrix}
\end{equation}
for all $w \in \B$ and $t \in \Zp$. 
\end{theorem}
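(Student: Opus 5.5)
We prove the explicit formula \eqref{QPsiandP}--\eqref{P} first; the equivalence of the two dissipativity notions then follows from it. Throughout, $(A,B,C,D)$ is minimal, hence $(C,A)$ is observable. Consequently the state trajectory $x$ corresponding to $w\in\B$ is uniquely determined by $w$, and $x(0)$ can be chosen arbitrarily.

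\textbf{Step 1 (reduction to a quadratic form in state and future inputs).} Let $\Psi\in\S^{qd}$, $\Psi\geq 0$, be a storage function. For $w\in\B$ we have $y_{[t,t+d-1]}=\mathcal{O}_d x(t)+\mathcal{T}_d u_{[t,t+d-1]}$. Hence, by \eqref{permutation},
\[
w_{[t,t+d-1]}=\Pi\begin{bmatrix} 0 & I \\ \mathcal{O}_d & \mathcal{T}_d\end{bmatrix}\begin{bmatrix} x(t)\\ u_{[t,t+d-1]}\end{bmatrix}.
\]
It follows that $Q_\Psi(w)(t)=F(x(t),u_{[t,t+d-1]})$, where $F(\xi,v)=[\xi^\top\; v^\top]\, N [\xi^\top\; v^\top]^\top$ for a matrix $N\geq 0$ obtained as a congruence of $\Psi$. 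We partition $N$ into blocks $N_{\xi\xi},N_{\xi v},N_{vv}$. The block $N_{\xi\xi}$ is exactly the matrix $P$ in \eqref{P}, which is therefore automatically positive semidefinite. It remains to show that $N_{\xi v}=0$ and $N_{vv}=0$.

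\textbf{Step 2 (the key step: $F(0,v)=0$ for all $v$).} This is the only place where the dissipation inequality is used. Fix $v\in\R^{md}$. Consider the trajectory of \eqref{sys} with $x(0)=0$, $u(t)=0$ for $t\in[0,d-1]$, and $u_{[d,2d-1]}=v$. Then $x(t)=0$ and $w(t)=0$ for $t\in[0,d-1]$, so $Q_\Psi(w)(0)=0$, and the static supply satisfies $Q_S(w)(t)=0$ on $[0,d-1]$. Summing the inequality $Q_{\nabla\Psi}\le Q_S$ over $t\in[0,d-1]$ gives
\[
Q_\Psi(w)(d)\le Q_\Psi(w)(0)=0 .
\]
On the other hand, $Q_\Psi(w)(d)=F(0,v)\ge 0$ since $N\geq 0$. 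Hence $F(0,v)=v^\top N_{vv}v=0$ for every $v$, so $N_{vv}=0$. Because $N\ge0$, a vanishing diagonal block forces the corresponding off-diagonal block to vanish, so $N_{\xi v}=0$. Therefore $F(\xi,v)=\xi^\top P\xi$, which is \eqref{QPsiandP}. The case $d=0$ is trivial, with $P=0$. This step is the one that needs an idea; the rest is bookkeeping.

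\textbf{Step 3 (equivalence).} For the ``if'' direction, suppose $\B$ is dissipative. By Lemma~\ref{l:psdcoefficient} there is a storage function with $\Psi\ge0$, and padding with zeros we may take $\Psi\in\S^{qd}$ for some $d\geq 1$. Let $(u,x,y)$ be any input-state-output trajectory of \eqref{sys}; then $w=(u,y)\in\B$ and $x$ is its state trajectory. By Steps 1--2, $V(x)=x^\top Px$ satisfies $V(x(t+1))-V(x(t))=Q_{\nabla\Psi}(w)(t)\le s(u(t),y(t))$, so \eqref{sys} is dissipative.

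For the ``only if'' direction, let $P\geq 0$ define a state storage function. By observability, $\mathcal{O}_n$ has a left inverse $\mathcal{O}_n^{+}$, so
\[
x(t)=\mathcal{O}_n^{+}\bigl(y_{[t,t+n-1]}-\mathcal{T}_n u_{[t,t+n-1]}\bigr)=K w_{[t,t+n-1]}
\]
for a suitable matrix $K$. Setting $\Psi:=K^\top PK\ge0$ gives $Q_\Psi(w)(t)=V(x(t))$ along $\B$. The state-space dissipation inequality then becomes $Q_{\nabla\Psi}\le Q_S$ on $\B$, so $Q_\Psi$ is a storage function and $\B$ is dissipative.
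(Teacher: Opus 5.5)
Your proof is correct and follows the paper's route. You write $Q_\Psi(w)(t)$ as a positive semidefinite quadratic form in $(x(t),u_{[t,t+d-1]})$, kill the input block using trajectories with zero initial state and leading zero inputs (on which the static supply vanishes), and use a left inverse of the observability matrix for the converse. The one difference is that you obtain $N_{vv}=0$ in a single step by telescoping the dissipation inequality over $d$ time steps, whereas the paper reaches the same conclusion by backward induction on the number of leading zero inputs, applying the inequality one step at a time; your version is slightly more streamlined.
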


\section{The problem of data-driven dissipativity}  \label{sec:problem_formulation}

We denote by $\Lmp$ the set of all behaviors of the form \eqref{behavior} with $m$ inputs and $p$ outputs. Moreover, $\Lmpcont := \{ \B \in \Lmp \mid \B \text{ is controllable}\}$ denotes the subset of all controllable behaviors. Let $L \geq 0$, and define the set 
$$
\LmpLcont := \{ \B \in \Lmpcont \mid \ell(\B) \leq L\}
$$
of all controllable behaviors with $m$ inputs and $p$ outputs whose lag is bounded by $L$. 

In what follows, we will consider the true behavior $\B_\true$. We will suppose that this behavior is unknown, but we do assume that $\B_\true$ is controllable and a bound $L \geq 0$ on its lag is given. In other words, $\B_\true \in \LmpLcont$. The set $\LmpLcont$ thus captures our \emph{prior knowledge} on the true behavior. Our goal is to verify dissipativity properties of $\B_\true$ from measured input-output data produced by this behavior. 
 
To this end, consider the $T \geq 1$ input-output samples 
$$
w(t) = \begin{bmatrix}
    u(t) \\ y(t)
\end{bmatrix}
$$ 
for $t = 0,1,\dots,T-1$, obtained from the true behavior, i.e.,
$$
w_{[0,T-1]}
\in \B_\true\!\!\mid_{[0,T-1]}.
$$
The set of behaviors consistent with the data is given by
$$
\Sigma = \left\{\B \in \Lmp \mid w_{[0,T-1]} \in \rB{0}{T-1}\right\}.
$$
This set thus consists of all behaviors that could have generated the given input-output data. 

With this in place, we can now define the notion of informative data studied in this paper. Consider the supply rate $Q_\Phi$. Recall from Theorem~\ref{t:degreestorage} that, if a behavior $\B$ is dissipative with respect to $Q_\Phi$, then there always exists a storage function $Q_\Psi$ of degree less than $\max \{\deg(Q_\Phi),\ell(\B)\}$. Based on this observation, we define the data to be informative for dissipativity if there exists a \emph{common} storage function $Q_\Psi$ of degree less than $\max \{\deg(Q_\Phi),L\}$ \emph{for all behaviors} consistent with the data and the prior knowledge. This is formalized in the following definition. 

\begin{definition} 
Let $Q_\Phi : (\mathbb{R}^q)^{\mathbb{Z}_+} \to (\mathbb{R})^{\mathbb{Z}_+}$ be a QDF and define $d:= \max\{\deg(Q_\Phi),L\}$. We say that the data $w_{[0,T-1]}$ are \emph{informative for dissipativity with respect to $Q_{\Phi}$} if  there exists a positive semidefinite $\Psi \in \S^{qd}$ such that 
\begin{equation}
\label{dissipationineqdata}
Q_{\nabla \Psi} \leq Q_\Phi \text{ on } \B
\end{equation}
for all $\B \in \Sigma \cap \LmpLcont$. 
\end{definition}

We abbreviate and say $w_{[0,T-1]}$ are informative for dissipativity when the supply rate $Q_{\Phi}$ is clear from the context. The problem studied in this paper is to provide conditions under which a given data set is informative for dissipativity. In addition, our aim is to provide a recipe for computing a storage function $Q_\Psi$ whenever it exists. 

\section{Main results} \label{sec:main_result}

In this section, we will state the main results of this paper. Before doing so, we define the following integer, originally introduced in \citep{camlibel2024}. It is given here in behavioral terms as follows:

\begin{equation*}
    \nmin := \min \{ n(\B) \mid \B \in \Sigma \}.
\end{equation*}

Thus, $\nmin$ is the smallest state-space dimension of any system consistent with the data. As shown in \citep{camlibel2024}, $\nmin$ can be computed from the data using the rank of a finite number of data Hankel matrices of different depths. The following theorem is the main result of this paper. It provides a sufficient condition for informativity for dissipativity, and asserts that this condition is necessary in the case of static supply rates satisfying an inertia assumption. The result also provides a recipe to obtain storage functions directly from data by solving a linear matrix inequality.

\begin{theorem} 
\label{t:dddiss}
Let $Q_\Phi : (\mathbb{R}^q)^{\mathbb{Z}_+} \to (\mathbb{R})^{\mathbb{Z}_+}$ be a QDF with minimal coefficient matrix $\Phi$ and define $d:= \max\{\deg(Q_\Phi),L\}$. Let $\B_\true \in \LmpLcont$ and consider the data $w_{[0,T-1]} \in \rBtrue{0}{T-1}$, where $T \geq d + 1$. Define $H := \mathcal{H}_{d+1}(w_{[0,T-1]})$. Then the following statements hold. 
\begin{enumerate}[label=(\roman*)]
    \item \label{item1} If $\rank H = \nmin + m(d+1)$ and there exists a positive semidefinite $\Psi \in \S^{q d}$ such that 
\begin{equation} \label{eq:dissipativity_LMI}
H^\top \left( \begin{bmatrix}
\Psi & 0 \\ 0 & 0_{q , q}
\end{bmatrix} - \begin{bmatrix}
0_{q , q} & 0 \\ 0 & \Psi
\end{bmatrix} + \begin{bmatrix}
    \Phi & 0 \\ 0 & 0
\end{bmatrix} \right) H \geq 0
\end{equation}
then the data $w_{[0,T-1]}$ are informative for dissipativity. Moreover, in this case, $Q_\Psi$ is a storage function for $(\B_*,Q_\Phi)$.
\item \label{item2} Now assume, in addition, that $Q_\Phi$ is a static supply rate and $\Phi \in \mathbb{S}^q$ is partitioned as
\begin{equation}
    \label{Phiprime}
\Phi = \begin{bmatrix}
    \Phi_{11} & \Phi_{12} \\ \Phi_{21} & \Phi_{22}
\end{bmatrix},
\end{equation}
where $\Phi_{11} \in \mathbb{S}^m$, $\Phi_{12} = \Phi_{21}^\top \in \mathbb{R}^{m \times p}$ and $\Phi_{22} \in \mathbb{S}^p$. Suppose that $\Phi$ has inertia $(p,0,m)$, and $\Phi_{22} \leq 0$. Then the data $w_{[0,T-1]}$ are informative for dissipativity if and only if $\rank H = \nmin + m(L+1)$ and there exists a positive semidefinite $\Psi \in \S^{qL}$ satisfying \eqref{eq:dissipativity_LMI}.
\end{enumerate}
\end{theorem}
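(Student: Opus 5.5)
For part \ref{item1}, I will rely on the identifiability result of \citep{camlibel2024}. The rank condition $\rank H = \nmin + m(d+1)$ with $d \geq L$ should imply two things. First, $\Sigma \cap \LmpLcont = \{\B_\true\}$, i.e.\ the data identify the system uniquely. Second, $\im H = \rBtrue{0}{d}$, since a behavior with lag at most $L \le d$ and minimal state dimension $\nmin$ has $\dim \rBtrue{0}{d} = \nmin + m(d+1)$, and the columns of $H$ lie in this subspace.

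Given this, every $w \in \B_\true$ has $w_{[t,t+d]} = H g$ for some $g$, because $\rBtrue{t}{t+d}\subseteq \rBtrue{0}{d}$. Set $\tilde\Phi := \mathrm{diag}(\Phi,0)$, padded to size $q(d+1)$; this is possible because $\deg Q_\Phi \le d$. Then
\[
Q_\Phi(w)(t) - Q_{\nabla\Psi}(w)(t) = g^\top H^\top\!\left(\begin{bmatrix}\Psi&0\\0&0\end{bmatrix} - \begin{bmatrix}0&0\\0&\Psi\end{bmatrix} + \tilde\Phi\right)\! H g \ge 0 .
\]
So $Q_\Psi$ with $\Psi \ge 0$ is a storage function for $\B_\true$, and hence for every element of $\Sigma\cap\LmpLcont$. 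This proves informativity.

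For part \ref{item2}, the static case gives $\deg Q_\Phi = 0$, so $d = L$, and sufficiency is part \ref{item1}. For necessity, assume informativity with a common $\Psi \ge 0$.

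\textbf{Step 1: the LMI.} Since $\B_\true \in \Sigma\cap\LmpLcont$, the dissipation inequality holds on $\B_\true$, and the columns of $H$ lie in $\rBtrue{0}{L}$. Evaluating the dissipation inequality along the data windows therefore gives \eqref{eq:dissipativity_LMI}.

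\textbf{Step 2: the rank condition, which I expect to be the main obstacle.} I will argue by contradiction. Suppose $\rank H < \nmin + m(L+1)$. Then, by the characterization in \citep{camlibel2024}, the data are not informative for identification, so there exists a second behavior $\B \in \Sigma \cap \LmpLcont$ that can be made "large" in a suitable direction. Concretely, I would aim to construct a controllable $\B$ of lag at most $L$, consistent with the data, such that $\B$ violates dissipativity with respect to $Q_\Phi$ for every storage function. The inertia assumptions are what make this work:
\begin{itemize}
\item Because $\Phi$ has inertia $(p,0,m)$ and $\Phi_{22}\le 0$, the supply is negative on directions "dominated by outputs". Dissipativity with $\Psi\ge 0$ requires $\sum_{t=0}^{N} Q_\Phi(w)(t) \ge -Q_\Psi(w)(0)$. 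Taking trajectories with zero initial window (zero initial state), this forces $\sum_t Q_\Phi(w)(t) \ge 0$, which is a frequency-domain bound $\begin{bmatrix} I\\ G(\lambda)\end{bmatrix}^* \Phi \begin{bmatrix} I\\ G(\lambda)\end{bmatrix}\ge 0$ on the unit circle.
\item Such a bound is violated by any transfer matrix $G$ with a sufficiently large gain somewhere, since the inertia implies that the set of "admissible" $G(\lambda)$ is bounded when $\Phi_{22}<0$. When $\Phi_{22}$ is only semidefinite, I would expect to use a limiting or perturbation argument instead.
\end{itemize}

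\textbf{Step 3: building the violating behavior.} The non-identifiability gives a nonzero direction in which the true system can be perturbed while staying consistent with the data: a family $\B_\alpha$, $\alpha\in\R$, all in $\Sigma\cap\LmpLcont$. For example, I would add to the output an extra component driven through a filter that annihilates the data. I would then scale $\alpha\to\infty$ so that the gain of $G_\alpha$ blows up, contradicting the common storage function.

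The delicate points in this step are:
\begin{itemize}
\item ensuring controllability and lag at most $L$ along the family, which I would handle by using genericity to avoid finitely many bad $\alpha$;
\item showing that the gain genuinely diverges, using the fact that the rank deficiency frees either a state dimension or an input–output window direction;
\item handling the unbounded case $\Phi_{22}\le 0$ singular, where I would exploit that inertia $(p,0,m)$ still makes the negative eigen-directions have nonzero output components.
\end{itemize}
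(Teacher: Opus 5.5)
Your part~(i), the sufficiency direction of~(ii), and your Step~1 are correct and agree with the paper. The gap is the necessity of the rank condition in~(ii) (your Steps~2--3). This is the core of the theorem, and it is only sketched. Your route is to exhibit a single behavior in $\Sigma\cap\LmpLcont$ that is not dissipative at all, using a unit-circle frequency-domain inequality and a gain that blows up along a data-consistent family. None of the needed ingredients is supplied.

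The family is never built. The filter annihilating the input data, $\sum_{j=0}^{L}\eta_j^\top u(t+j)$, uses future inputs, so adding it to the output does not produce a system of the form~\eqref{sys}. Moreover, the rank deficiency may lie entirely in the state block: a left-kernel vector with $\xi\neq 0$ while $\mathcal{H}_{L+1}(u_{[0,T-1]})$ has full row rank. In that case there is no input-annihilating filter at all, and the natural perturbation changes $A$. For such perturbations you give no reason why the gain should diverge, nor why the perturbed system should fail to be dissipative with some storage function of its own. Finally, for singular $\Phi_{22}$ (e.g.\ passivity, one of the two motivating examples), a large gain is perfectly compatible with the frequency-domain inequality, and you explicitly leave this case open.

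The missing idea is that the storage function is \emph{common} to all behaviors in $\Sigma\cap\LmpLcont$, so one never needs an individual non-dissipative behavior. The paper's construction runs as follows.
\begin{enumerate}
\item It uses \citep[Lem.~28]{camlibel2024} to pass to a consistent controllable $\B$ with lag exactly $L$. The case $L=0$ is treated separately by perturbing $D$ to $D+\alpha\xi\eta^\top$.
\item It takes a left-kernel vector $(\xi,\eta_0,\dots,\eta_L)$ of $\begin{bmatrix}\mathcal{H}_1(x_{[0,T-L-1]})\\ \mathcal{H}_{L+1}(u_{[0,T-1]})\end{bmatrix}$ and a nonzero $\zeta$ with $CA^i\zeta=0$ for $i\le L-2$, and defines $(\hat A_\alpha,\hat B_\alpha,C,\hat D_\alpha)$.
\item This realization is data-consistent and not isomorphic to $(A,B,C,D)$ by \citep[Lem.~24]{camlibel2024}, and it is controllable for large $|\alpha|$ by Lemma~\ref{lemmacontrollability}.
\item Since $C\hat A_\alpha^i=CA^i$ for $i\le L-1$, Theorem~\ref{theoremequivdiss} turns the common $\Psi$ into the \emph{same} state storage matrix $P$ for both systems. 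Moreover $P>0$ by the inertia assumption and observability of $(C,A)$.
\item One step of the dissipation inequality at $t=0$ then fails for large $|\alpha|$. The perturbed next state $x(1)+\alpha v$ with $v\neq 0$ contributes $\alpha^2 v^\top P v>0$, while the $\alpha^2$-term of the supply is $\alpha^2 z^\top\Phi_{22}z\le 0$.
\item In the remaining case only $D$ is perturbed, by a rank-one $G\neq 0$. There one uses $\Phi_{22}G\neq 0$ or, since $\Phi$ is nonsingular, $u(0)^\top\Phi_{12}Gu(0)\neq 0$ together with the free sign of $\alpha$.
\end{enumerate}
The common $P>0$ in the state cases and the sign freedom of $\alpha$ in the $D$-only case are precisely what handle singular $\Phi_{22}$.
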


\begin{remark}
The rank condition in Theorem~\ref{t:dddiss}~\ref{item2} was shown to be necessary and sufficient for informativity for system identification in \citep[Thm. 9]{camlibel2024} in the situation that only a bound on the lag is given, see also \citep[Rem. 11]{camlibel2024}. Therefore, Theorem~\ref{t:dddiss}~\ref{item2} can be interpreted as follows: under the additional assumptions, the data $w_{[0,T-1]}$ can only be informative for dissipativity if they enable unique system identification, i.e., if $\Sigma \cap \LmpLcont = \{\B_\true\}$. We note that the passive and $\ell_2$-gain supply rates introduced in Section~\ref{ssec:dissipativityiso} are examples of supply rates satisfying the additional assumptions of Theorem~\ref{t:dddiss}~\ref{item2}.
\end{remark}

\begin{proof}[Proof of Theorem~\ref{t:dddiss}]
    Note that in the case that $\deg(Q_\Phi) = 0$, as assumed in \ref{item2}, we have $d = L$. Therefore, to prove the `if' statement of both \ref{item1} and \ref{item2}, suppose that $\rank H = \nmin + m(d+1)$ and there exists a positive semidefinite $\Psi \in \S^{qd}$ satisfying \eqref{eq:dissipativity_LMI}.  The rank condition implies, by \citep[Thm. 9 and Rem. 11]{camlibel2024}, that $\Sigma \cap \LmpLcont = \{\B_\ast\}$ and $\nmin = n(\B_\true)$. As such, $\im H = \B_\true\!\!\mid_{[0,d]}$. In fact, by time-invariance,   $\B_\true\!\!\mid_{[t,t+d]} \subseteq \im H$ for any $t \in \Zp$. We conclude from \eqref{eq:dissipativity_LMI} that $Q_{\nabla \Psi} \leq Q_\Phi$ on $\B_\ast$. This proves that the data $w_{[0,T-1]}$ are informative for dissipativity, and $Q_\Psi$ is indeed a storage function for $(\B_*,Q_\Phi)$.

    Next, we prove the `only if' part of item \ref{item2}. To this end, we suppose that $\Phi$ is of the form \eqref{Phiprime} with inertia $(p,0,m)$ and $\Phi_{22}\leq 0$. Assume that the data $w_{[0,T-1]}$ are informative for dissipativity. Suppose on the contrary that
    \begin{equation}
    \label{negationrank}
    \rank H < \nmin + m(L+1).
    \end{equation} 
    Let $(A_\ast,B_\ast,C_\ast,D_\ast)$ be a controllable and observable state-space representation of the true behavior $\B_\true$. Denote $n_\true = n(\B_\true)$ and $\ell_\true = \ell(\B_\true)$. We will now distinguish between two cases, namely $L = 0$ and $L > 0$. 
    
    We start with the case that $L = 0$. This implies that $\ell_\true = 0$ and thus $n_\true = 0$. Thus, the matrices $A_\true, B_\true$ and $C_\true$ are void. Hence, we obtain $\mathcal{H}_1(y_{[0,T-1]}) = D_\true \mathcal{H}_1(u_{[0,T-1]})$. Using the fact that $\nmin = 0$ because $n_\true = 0$, we conclude by \eqref{negationrank} that $\rank \mathcal{H}_1(u_{[0,T-1]}) < m$. Let $\eta \in \mathbb{R}^m$ be a vector such that $\|\eta\| = 1$ and $\eta^\top \mathcal{H}_1(u_{[0,T-1]}) = 0$. This implies that the input-output behavior $\B_{\alpha,\xi}$ associated with the state-space system $(A_\true,B_\true,C_\true,D_\true+\alpha \xi \eta^\top)$ is such that $\B_{\alpha,\xi} \in \Sigma \cap \LmpLcont$ for any $\xi \in \mathbb{R}^p$ and $\alpha \in \mathbb{R}$. By hypothesis, the data $w_{[0,T-1]}$ are informative for dissipativity, and thus $\B_{\alpha, \xi}$ is dissipative for all $\xi \in \mathbb{R}^p$ and $\alpha \in \mathbb{R}$. Therefore,
    \begin{equation}
    \label{ineqetaalphaxi}
    \begin{bmatrix}
        I \\ D_\true + \alpha \xi \eta^\top 
    \end{bmatrix}^\top
    \Phi 
    \begin{bmatrix}
        I \\ D_\true + \alpha \xi \eta^\top 
    \end{bmatrix} \geq 0 
    \end{equation} 
    for all $\xi \in \mathbb{R}^p$ and $\alpha \in \mathbb{R}$. If $\Phi_{22} \neq 0$, then we can choose $\xi$ such that $\xi^\top \Phi_{22} \xi < 0$ because $\Phi_{22} \leq 0$. In this case, $\alpha$ can be chosen sufficiently large, violating \eqref{ineqetaalphaxi} and thus leading to a contradiction to \eqref{negationrank}. On the other hand, if $\Phi_{22} = 0$ then \eqref{ineqetaalphaxi} reduces to 
    $$
        \begin{bmatrix} I \\ D_\true \end{bmatrix}^\top \Phi \begin{bmatrix} I \\ D_\true \end{bmatrix} + \alpha \left( \Phi_{12} \xi \eta^\top + \eta \xi^\top \Phi_{21} \right) \geq 0. 
    $$
    By the inertia condition on $\Phi$ and the hypothesis that $\Phi_{22} = 0$ we conclude that $\Phi_{12} \neq 0$. Therefore, we can choose $\xi$ such that $\Phi_{12} \xi \neq 0$ and thus the symmetric matrix $\Phi_{12} \xi \eta^\top + \eta \xi^\top \Phi_{21}$ is nonzero because $\eta \neq 0$. Therefore, we can choose $\alpha$ sufficiently large or sufficiently small, violating \eqref{ineqetaalphaxi}, which again leads to a contradiction to \eqref{negationrank}. 

    Next, we consider the case that $L > 0$. We claim that there exists a $\B \in \Sigma \cap \LmpLcont$ with lag \emph{precisely} $L = \ell(\B)$ such that
    \begin{equation}
    \label{rankineqB}
    \rank
  \begin{bmatrix}
        \mathcal{H}_1(x_{[0,T-L-1]}) \\
        \mathcal{H}_{L+1}(u_{[0,T-1]}) 
    \end{bmatrix} < n(\B)+m(L+1),
    \end{equation}
    where $x_{[0,T]}$ is a state sequence of some controllable and observable state-space representation $(A,B,C,D)$ of $\B$, compatible with the data $w_{[0,T-1]}$.
    
    To show this, consider the true behavior $\B_\true \in \Sigma \cap \LmpLcont$ and let $x^\true_{[0,T]}$ be a state sequence of $(A_\true,B_\true,C_\true,D_\true)$ compatible with the input-output data $w_{[0,T-1]}$. Because $\ell_\true \leq L$, \eqref{negationrank} implies that 
    
    \begin{equation}
        \label{rankineq}
    \rank
    \begin{bmatrix}
        \mathcal{H}_1(x^*_{[0,T-L-1]}) \\
        \mathcal{H}_{L+1}(u_{[0,T-1]}) 
    \end{bmatrix} = \rank H < \nmin + m(L+1) \leq n_\true+m(L+1).
    \end{equation}
    As such, if $\ell_\true = L$, then we can simply take $\B = \B_\true$, proving the claim.

    Next, consider the case that $\ell_\true < L$. Define $\mu := L - \ell_\true \geq 1$. Because of the strict inequality in \eqref{rankineq}, it follows from \citep[Lem. 28]{camlibel2024} that there exists a controllable $\B \in \Sigma$ with lag $\ell(\B) = \ell_\true + \mu = L$. Let $(A,B,C,D)$ be a minimal state-space representation of $\B$, and denote by $x_{[0,T]}$ a state sequence of this representation compatible with the input-output data $w_{[0,T-1]}$. We note that
    $$
    \rank \begin{bmatrix}
        \mathcal{H}_1(x_{[0,T-L-1]}) \\ \mathcal{H}_{L+1}(u_{[0,T-1]})
    \end{bmatrix} = \rank H < \nmin + m(L+1) \leq n(\B) + m(L+1),
    $$
    from which we conclude that there indeed exists a behavior $\B \in \Sigma \cap \LmpLcont$ with lag precisely $L$ satisfying \eqref{rankineqB}.
    
    Now, by hypothesis, there exists a positive semidefinite $\Psi \in \mathbb{S}^{qL}$ so that $Q_{\nabla \Psi} \leq Q_\Phi$ on $\B$. In what follows, we will abbreviate $n = n(\B)$. By \eqref{rankineqB}, there exist vectors $\xi \in \mathbb{R}^n$ and $\eta_i \in \mathbb{R}^m$ for $i = 0,1,\dots,L$, not all zero, such that
    $$
    \begin{bmatrix}
        \xi^\top & \eta_0^\top & \cdots & \eta_{L}^\top 
    \end{bmatrix}
    \begin{bmatrix}
        \mathcal{H}_1(x_{[0,T-L-1]}) \\
        \mathcal{H}_{L+1}(u_{[0,T-1]})
    \end{bmatrix}
    =0.
    $$
    Let $\zeta \in \mathbb{R}^n$ be a nonzero vector such that 
    \begin{equation}
        \label{CAzeta}
    CA^i \zeta = 0 
    \end{equation}
    for $i = 0,\dots,L-2$. This condition on $\zeta$ is void if $L = 1$. 
    Consider a nonzero $\alpha \in \mathbb{R}$ and define the matrices
    $$
    \hat{A}_\alpha := A + \alpha \zeta \xi^\top, \: \hat{B}_\alpha  := B + \alpha \sum_{j=0}^{L} \hat{A}_\alpha^j \zeta \eta_j^\top, \: \hat{C}_\alpha := C, \:  \text{ and }  \hat{D}_\alpha := D + \alpha \sum_{j=0}^{L-1} C \hat{A}_\alpha^j \zeta \eta_{j+1}^\top.
    $$
    It follows from Lemma~\ref{lemmacontrollability} in \ref{sec:lemcont} and the controllability of $(A,B)$ that there exists an $\bar{\alpha} > 0$ such that $(\hat{A}_\alpha,\hat{B}_\alpha)$ is controllable for all $\alpha \in \mathbb{R}$ satisfying $|\alpha| \geq \bar{\alpha}$. Let $\alpha$ be such a real number, and denote by $\hat{\B}_\alpha$ the behavior associated with $(\hat{A}_\alpha,\hat{B}_\alpha,\hat{C}_\alpha,\hat{D}_\alpha)$. Then, it follows from \citep[Lem. 24(a)]{camlibel2024} that $\hat{\B}_\alpha \in \Sigma$. Moreover, because $(A,B)$ is controllable, it follows from \citep[Lem. 24(b)]{camlibel2024} that $(A,B,C,D)$ and $(\hat{A}_\alpha,\hat{B}_\alpha,\hat{C}_\alpha,\hat{D}_\alpha)$ are not isomorphic. 

    \noindent  
    Now, by definition of $\zeta$ we have that 
$$ 
\mathcal{O}_{L} = \begin{bmatrix}
    C \\ CA \\ \vdots \\ CA^{L-1}
\end{bmatrix} = \begin{bmatrix}
    \hat{C}_\alpha \\ \hat{C}_\alpha \hat{A}_\alpha \\ \vdots \\ \hat{C}_\alpha \hat{A}_\alpha^{L-1}
\end{bmatrix}.
$$
By Theorem~\ref{theoremequivdiss}, the systems $(A,B,C,D)$ and $(\hat{A}_\alpha,\hat{B}_\alpha,\hat{C}_\alpha,\hat{D}_\alpha)$ are dissipative with respect to the supply rate
$$
s(u,y) = 
\begin{bmatrix}
    u \\ y
\end{bmatrix}^\top \Phi \begin{bmatrix}
    u \\ y
\end{bmatrix},
$$
and, by the same theorem, we conclude that
$$
P :=  \begin{bmatrix}
    0 \\ \mathcal{O}_{L}
\end{bmatrix}^\top \Pi^\top \Psi \Pi \begin{bmatrix}
    0 \\ \mathcal{O}_{L}
\end{bmatrix}
$$
is the coefficient matrix of a quadratic storage function of state for both $(A,B,C,D)$ and $(\hat{A}_\alpha,\hat{B}_\alpha,\hat{C}_\alpha,\hat{D}_\alpha)$. Here we recall that $\Pi$ is defined by \eqref{permutation}. By the hypothesis that $\Phi$ has inertia $(p,0,m)$ and the fact that $(C,A)$ is observable, we conclude that $P > 0$ by \citep[Lemma 4.4]{burohman2023thesis}. 

    We now distinguish the cases that $\xi$ is nonzero and zero. We start with $\xi \neq 0$. In this case, let $x(0) \in \R^n$ be such that $\xi^\top x(0) \neq 0$ and choose $u(0) = 0 $. 

We investigate the situation in which both systems $(A,B,C,D)$ and $(\hat{A}_\alpha,\hat{B}_\alpha,\hat{C}_\alpha,\hat{D}_\alpha)$ are initialized at the state $x(0)$, and are influenced by $u(0)$. The resulting states of these respective systems at time $t=1$ are:
\begin{align*}
    x(1) &= Ax(0) \\
    \hat{x}(1) &= x(1) + \alpha v,
\end{align*}
where $v := \zeta \xi^\top x(0)$ is a nonzero vector in $\R^n$ since $\zeta \neq 0$ and $\xi^\top x(0) \neq 0$. The outputs of both systems at time $t=0$ are given by 
\begin{align*}
    y(0) &= Cx(0) \\
    \hat{y}(0) &= y(0).
\end{align*}
The dissipation inequality \eqref{dispineq} for system $(\hat{A}_\alpha,\hat{B}_\alpha,\hat{C}_\alpha,\hat{D}_\alpha)$ at time $t = 0$ now reads: 
\begin{equation}
    \label{dispineqhat}
(x(1) + \alpha v)^\top P (x(1) + \alpha v) - x(0)^\top P x(0) \leq \begin{bmatrix}
    0 \\ y(0)
\end{bmatrix}^\top \Phi \begin{bmatrix}
    0 \\ y(0)
\end{bmatrix}.
\end{equation}
There is only one term in this inequality that is quadratic in $\alpha$, namely $\alpha^2 v^\top P v$ on the left hand side. Note that $\alpha^2 v^\top P v > 0$ since $P >0$ and $v \neq 0$. As such, there exists a sufficiently large 
$\alpha \geq \bar{\alpha}$ that violates the dissipation inequality \eqref{dispineqhat}. We thus arrive at a contradiction to \eqref{negationrank}.

Next, we consider the second case in which $\xi = 0$. In this case,
$$
\hat{A}_\alpha = A, \:\: \hat{B}_\alpha = B + \alpha \sum_{j=0}^{L} A^j \zeta \eta_j^\top, \:\: \hat{C}_\alpha = C, \:\: \text{ and } \:\: \hat{D}_\alpha = D + \alpha CA^{L-1} \zeta \eta_{L}^\top,
$$
where the expression of $\hat{D}_\alpha$ follows due to \eqref{CAzeta}. Within this case, we now make a distinction between the cases that $F:=\sum_{j=0}^{L} A^j \zeta \eta_j^\top$ is nonzero and zero. First consider $F \neq 0$. Then we choose $x(0) = 0$  and $u(0) \in \R^m$ such that $Fu(0) \neq 0$. The resulting states of $(A,B,C,D)$ and $(\hat{A}_\alpha,\hat{B}_\alpha,\hat{C}_\alpha,\hat{D}_\alpha)$ at time $t=1$ are:
\begin{align*}
    x(1) &= Bu(0) \\
    \hat{x}(1) &= x(1) + \alpha v,
\end{align*}
where $v:= Fu(0) \neq 0$ and the outputs at time $t = 0$ are:
\begin{align*}
    y(0) &= Du(0) \\
    \hat{y}(0) &= y(0) + \alpha z,
\end{align*}
where $z:= CA^{L-1}\zeta \eta_{L}^\top u(0)$. The dissipation inequality \eqref{dispineq} for system $(\hat{A}_\alpha,\hat{B}_\alpha,\hat{C}_\alpha,\hat{D}_\alpha)$ at time $t = 0$ now reads: 
\begin{equation}
    \label{dispineqhat2}
(x(1) + \alpha v)^\top P (x(1) + \alpha v) \leq \begin{bmatrix}
    u(0) \\ y(0) + \alpha z 
\end{bmatrix}^\top \Phi \begin{bmatrix}
    u(0) \\ y(0) + \alpha z 
\end{bmatrix}.
\end{equation}
In this case, the dissipation inequality has two quadratic terms in $\alpha$, namely $\alpha^2 v^\top P v > 0$ on the left hand side, and $\alpha^2z^\top \Phi_{22} z$ on the right hand side. Since $\Phi_{22} \leq 0$, we have $\alpha^2z^\top \Phi_{22} z \leq 0$. As such, there exists a sufficiently large 
$\alpha \geq \bar{\alpha}$ that violates the dissipation inequality. Hence, also in this case, we reach a contradiction to \eqref{negationrank}.

Next, suppose that $F = 0$. Since $\xi = 0$, we now have that
$$
\hat{A}_\alpha = A, \:\: \hat{B}_\alpha = B, \:\: \hat{C}_\alpha = C, \:\: \text{ and } \:\: \hat{D}_\alpha = D + \alpha G,
$$
where $G := CA^{L-1}\zeta \eta_{L}^\top$. because $(A,B,C,D)$ and $(\hat{A}_\alpha,\hat{B}_\alpha,\hat{C}_\alpha,\hat{D}_\alpha)$ are not isomorphic, we have that $G \neq 0$. Suppose that $\Phi_{22} G \neq 0$. Then select $u(0) \in \mathbb{R}^m$ such that $\Phi_{22} G u(0) \neq 0$. Let $x(0) = 0 \in \mathbb{R}^n$ and compute $x(1) = Bu(0)$ and $y(0) = Du(0)$. Then, the dissipation inequality for $(\hat{A}_\alpha,\hat{B}_\alpha,\hat{C}_\alpha,\hat{D}_\alpha)$ at time $t = 0$ reads: 
$$
x(1)^\top P x(1) \leq \begin{bmatrix}
    u(0) \\ y(0) + \alpha G u(0)
\end{bmatrix}^\top \Phi \begin{bmatrix}
    u(0) \\ y(0) + \alpha G u(0)
\end{bmatrix}.
$$
In this case, the only quadratic term in $\alpha$ is $\alpha^2 u(0)^\top G^\top \Phi_{22} G u(0) < 0$. Therefore, we can choose a sufficiently large 
$ \alpha \geq \bar{\alpha}$ to violate the dissipation inequality, leading again to a contradiction to \eqref{negationrank}.

Finally, suppose that $\Phi_{22} G = 0$. The hypotheses on $\Phi$ imply that $\Phi$ is nonsingular, and thus $\Phi_{12} G \neq 0$ because $G$ is nonzero. Since $G$ has rank one and $\Phi_{12} G \neq 0$ then  $\Phi_{12} G$ has rank one, and thus it is not skew-symmetric. Therefore, there exists a vector $u(0) \in \mathbb{R}^m$ such that $u(0)^\top \Phi_{12} G u(0) \neq 0$. For $x(0) = 0$, $x(1)=Bu(0)$ and $y(0)=Du(0)$ as before, the dissipation inequality for $(\hat{A}_\alpha,\hat{B}_\alpha,\hat{C}_\alpha,\hat{D}_\alpha)$ at time $t = 0$ now reads: 
$$
x(1)^\top P x(1) \leq \begin{bmatrix}
    u(0) \\ y(0)
\end{bmatrix}^\top \Phi \begin{bmatrix}
    u(0) \\ y(0) 
\end{bmatrix} + 2 \alpha u(0)^\top \Phi_{12} G u(0).
$$
Depending on the sign of $u(0)^\top \Phi_{12} G u(0)$, we can now choose a sufficiently large or sufficiently small $\alpha \in \mathbb{R}$ such that $|\alpha| \geq \bar{\alpha}$, and for which the dissipation inequality is violated. This again leads to a contradiction to \eqref{negationrank}. 

We therefore see that in all cases \eqref{negationrank} is violated, allowing us to conclude that $\rank H = \nmin + m(L+1)$. This rank condition implies by \citep[Thm. 9]{camlibel2024} that $\Sigma \cap \LmpLcont = \{\B_\true\}$ and $\nmin = n_\true$. As such, we conclude that $\im H = \B_\ast\!\!\mid_{[0,L]}$. Since $\Psi \geq 0$ is such that $Q_{\nabla \Psi} \leq Q_\Phi$ on $\B_*$, the matrix $\Psi$ satisfies \eqref{eq:dissipativity_LMI}. This proves the theorem.
\end{proof}

\section{Conclusion} \label{sec:conclusion} 

The paper has established conditions on input-output data for informativity for dissipativity, without requiring an explicit system model. Assuming the data-generating system is LTI and controllable and its lag is upper bounded by a given constant, we have formulated dissipation inequalities in terms of QDFs and data matrices using behavioral systems theory, enabling a purely data-driven characterization of dissipativity.  


Our main result has established sufficient conditions for informativity for dissipativity in the case of general dynamic supply rates. For static supply rates satisfying an inertia assumption, we have further shown that this condition is also necessary. In this case, a key insight is that dissipativity can only be ascertained from data that are informative for system identification. As byproducts of our development, we have also presented auxiliary results on quadratic difference forms. In particular, we have shown that the degree of a QDF storage function is always less than the maximum of the lag of the system and the degree of the supply rate. In addition, we have shown that for static supply rates, the QDF storage function is always a quadratic function of the state, and we have provided an explicit formula for the coefficient matrix of this state function. 

Our findings provide a rigorous foundation for data-driven dissipativity analysis, but also raise several further questions. A fundamental open question is to provide necessary and sufficient conditions for data-driven dissipativity for general dynamic supply rates. Another promising avenue, which is the subject of ongoing work, is the application of these conditions to the study of optimization algorithms viewed as discrete-time dynamical systems—a step toward formal guarantees for algorithmic pipelines and their convergence properties. Finally, extensions to nonlinear systems and noise-robust formulations are also natural directions for future research. 

\appendix

\section{Relation between state and input-output variables} \label{sec:lemstate}

In what follows, we present an auxiliary lemma relating the state and input-output variables of an LTI system. 

\begin{lemma}
\label{l:Z1Z2}
Consider a behavior $\B$ of the form \eqref{behavior} with minimal state-space representation $(A,B,C,D)$. Let $w\in\B$ and let $x:\Zp \to \mathbb{R}^n$ be a state trajectory of $(A,B,C,D)$ corresponding to $w$. There exist matrices ${Z_1 \in \R^{n\times q\ell}}$ and ${Z_2 \in \R^{n\times q\ell}}$ 
 such that
\begin{align}
\label{x1}
x(t) &= Z_1 w_{[t,t+\ell-1]}, \text{ and} \\
\label{x2}
x(t+\ell) &= Z_2 w_{[t,t+\ell-1]},
\end{align}
for all $t \in \Zp$.
\end{lemma}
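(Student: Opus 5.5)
We prove Lemma~\ref{l:Z1Z2} by writing out the stacked input-output equations of the minimal realization over a window of length $\ell$, then using observability of $(C,A)$ together with the definition of the lag. Fix $t\in\Zp$. Iterating \eqref{sys} gives
\begin{equation*}
y_{[t,t+\ell-1]} = \mathcal{O}_\ell\, x(t) + \mathcal{T}_\ell\, u_{[t,t+\ell-1]},
\end{equation*}
where we use the convention that the Markov-parameter blocks in $\mathcal{T}_\ell$ are ordered consistently with the stacking $u_{[t,t+\ell-1]}$. We also have
\begin{equation*}
x(t+\ell) = A^\ell x(t) + \mathcal{C}_\ell\, u_{[t,t+\ell-1]},
\end{equation*}
again up to the block ordering in $\mathcal{C}_\ell$, which places $A^{\ell-1}B$ first and hence multiplies $u(t)$. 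These identities hold for every $t$, with fixed matrices, by time-invariance.

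\textbf{Step 1: formula \eqref{x1}.} Since the representation is minimal, $(C,A)$ is observable, so $\rank\mathcal{O}_n=n$. By definition of the lag $\ell=\ell(C,A)$, the rank of $\mathcal{O}_k$ stops increasing at $k=\ell$. The standard argument is that $\ker\mathcal{O}_{\ell}=\ker\mathcal{O}_{\ell+1}$ implies $\ker \mathcal{O}_\ell$ is $A$-invariant, hence $\ker\mathcal{O}_\ell=\ker\mathcal{O}_k$ for all $k\ge \ell$. Consequently $\rank\mathcal{O}_\ell=\rank\mathcal{O}_n=n$, so $\mathcal{O}_\ell$ has full column rank and a left inverse $\mathcal{O}_\ell^\dagger$ exists. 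Then
\begin{equation*}
x(t)=\mathcal{O}_\ell^\dagger\bigl(y_{[t,t+\ell-1]}-\mathcal{T}_\ell u_{[t,t+\ell-1]}\bigr).
\end{equation*}
The right-hand side is linear in $(u_{[t,t+\ell-1]},y_{[t,t+\ell-1]})$, which equals $\Pi^{-1}w_{[t,t+\ell-1]}$ for the permutation $\Pi$ as in \eqref{permutation}, now with $\ell$ in place of $d$. Hence
\begin{equation*}
Z_1 := \begin{bmatrix}-\mathcal{O}_\ell^\dagger\mathcal{T}_\ell & \mathcal{O}_\ell^\dagger\end{bmatrix}\Pi^{\top}.
\end{equation*}

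\textbf{Step 2: formula \eqref{x2}.} Substitute Step 1 into the expression for $x(t+\ell)$:
\begin{equation*}
x(t+\ell)=A^\ell Z_1 w_{[t,t+\ell-1]}+\begin{bmatrix}\mathcal{C}_\ell & 0\end{bmatrix}\Pi^{\top} w_{[t,t+\ell-1]}.
\end{equation*}
So we take $Z_2 := A^\ell Z_1 + \begin{bmatrix}\mathcal{C}_\ell & 0\end{bmatrix}\Pi^\top$. Note that $x(t)$ is uniquely determined by $w$ here, because $(C,A)$ is observable.

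\textbf{Edge cases and main obstacle.} If $\ell=0$, then observability forces $n=0$. All matrices are then void and the statements hold trivially under the paper's conventions for void matrices. The only nonroutine step is showing that $\mathcal{O}_\ell$ has full column rank, that is, that the rank stabilization at $\ell$ propagates to all $k\ge\ell$. This is handled by the $A$-invariance argument in Step 1: if $\mathcal{O}_\ell z=0$, then $CA^\ell z=0$ and therefore $\mathcal{O}_\ell Az=0$. Everything else is bookkeeping of the block orderings.
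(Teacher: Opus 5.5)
Your proposal is correct and follows the paper's proof: invert $\mathcal{O}_\ell$ in $y_{[t,t+\ell-1]}=\mathcal{O}_\ell x(t)+\mathcal{T}_\ell u_{[t,t+\ell-1]}$ to get $Z_1$, then substitute into $x(t+\ell)=A^\ell x(t)+\mathcal{C}_\ell u_{[t,t+\ell-1]}$ to get $Z_2$. The only difference is that you also justify why $\mathcal{O}_\ell$ has full column rank, using $A$-invariance of $\ker\mathcal{O}_\ell$ together with observability, a step the paper simply asserts.
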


\begin{proof}
By the laws of the system, 
$$
y_{[t,t+\ell-1]} = \mathcal{O}_\ell x(t) + \mathcal{T}_\ell
u_{[t,t+\ell-1]},
$$
where we recall that $\mathcal{O}_\ell$ and $\mathcal{T}_\ell$ are defined in Section~\ref{subsec:state-space systems}. Since $(A,B,C,D)$ is a minimal state-space representation, the pair $(C,A)$ is observable and $\mathcal{O}_\ell$ has full column rank. It thus admits a left inverse $\mathcal{O}_\ell^\dagger$ such that 
$$
x(t) = \begin{bmatrix}
-\mathcal{O}_\ell^\dagger \mathcal{T}_\ell & \mathcal{O}_\ell^\dagger
\end{bmatrix} \begin{bmatrix}
u_{[t,t+\ell-1]} \\ y_{[t,t+\ell-1]}
\end{bmatrix}.
$$
By noting that $w_{[t,t+\ell-1]}$ and $\begin{bmatrix}
u_{[t,t+\ell-1]} \\ y_{[t,t+\ell-1]}
\end{bmatrix}$ differ merely by a permutation of rows, we see that there exists a matrix $Z_1$ such that \eqref{x1} holds.

Next, to prove \eqref{x2}, note that $x(t+\ell) = A^\ell x(t) + \mathcal{C}_\ell u_{[t,t+\ell-1]}$. Now, using the fact that 
$$
x(t) = \begin{bmatrix}
-\mathcal{O}_\ell^\dagger \mathcal{T}_\ell & \mathcal{O}_\ell^\dagger
\end{bmatrix} \begin{bmatrix}
u_{[t,t+\ell-1]} \\ y_{[t,t+\ell-1]}
\end{bmatrix},
$$
we obtain 
$$
x(t+\ell) = \begin{bmatrix}
\mathcal{C}_\ell-A^\ell \mathcal{O}_\ell^\dagger \mathcal{T}_\ell & A^\ell \mathcal{O}_\ell^\dagger
\end{bmatrix} \begin{bmatrix}
u_{[t,t+\ell-1]} \\ y_{[t,t+\ell-1]}
\end{bmatrix}.
$$
We conclude that there exists a matrix $Z_2$ satisfying \eqref{x2}. This proves the lemma.
\end{proof}

\section{Proof of Theorem~\ref{t:degreestorage}}
\label{sec:proofdegreestorage}

\begin{proof}[Proof of Theorem~\ref{t:degreestorage}]
Let $Q_\Psi$ be a storage function for $(\B,Q_\Phi)$. By Lemma~\ref{l:psdcoefficient}, we assume, without loss of generality, that $\Psi \geq 0$. If $\deg(Q_\Psi) < \max\{\deg(Q_\Phi),\ell\}$ there is nothing to prove. Suppose that $\deg(Q_\Psi) =: d \geq \max\{\deg(Q_\Phi),\ell\} \geq 0$. We will prove that there exists a storage function of degree $d-1$. Repeated application of this argument thus proves the theorem. 

Let $(A,B,C,D)$ be a minimal state-space representation of $\B$. Let $w \in \B$ be a trajectory of the system for which $w(0),\dots,w(d) = 0$ and, consequently, 
$$
w(d+1) = \begin{bmatrix}
I \\ D
\end{bmatrix} u(d+1),
$$
where $u(d+1) \in \mathbb{R}^m$. This implies that $Q_\Psi(w)(0) = 0$ and 
$$
Q_\Psi(w)(1) = u(d+1)^\top \begin{bmatrix}
I \\ D
\end{bmatrix}^\top \Psi_{d,d} \begin{bmatrix}
I \\ D
\end{bmatrix} u(d+1).
$$
Also, since $\deg(Q_\Phi) \leq d$ we have that $Q_\Phi(w)(0) = 0$. Hence, it follows from the dissipation inequality~\eqref{eq:dissipation_nablapsi} that $Q_\Psi(w)(1) \leq 0$ and thus $Q_\Psi(w)(1) = 0$. Since $\Psi \geq 0$ also $\Psi_{d,d} \geq 0$. We conclude that $\Psi_{d,d} \begin{bmatrix}
I \\ D
\end{bmatrix} u(d+1) = 0$. As $u(d+1)$ is arbitrary, we conclude that 
$$\Psi_{d,d} \begin{bmatrix}
I \\ D
\end{bmatrix} = 0.
$$
Since $\Psi \geq 0$, it holds that $\ker \Psi_{d,d} \subseteq \ker \Psi_{i,d}$ for all $i = 0,\dots,d$. It follows that 
$$
\Psi_{i,d} \begin{bmatrix}
I \\ D
\end{bmatrix} = 0 
$$
for all $i = 0,\dots,d$. 

Now consider an arbitrary trajectory $w \in \B$ with corresponding state trajectory $x:\Zp \to \mathbb{R}^n$. We can write 
$$
w(t+d) = \begin{bmatrix}
I \\ D
\end{bmatrix} u(t+d) + \begin{bmatrix}
0 \\ C
\end{bmatrix}x(t+d) = \begin{bmatrix}
I \\ D
\end{bmatrix} u(t+d) + \begin{bmatrix}
0 \\ C
\end{bmatrix} Z_2 w_{[t+d-\ell,t+d-1]},
$$
where the second equality follows from Lemma~\ref{l:Z1Z2} in \ref{sec:lemstate} and the fact that $d \geq \ell$. This means that 
$$
\Psi_{i,d} w(t+d) = \Psi_{i,d} \begin{bmatrix}
0 \\ C
\end{bmatrix} Z_2 w_{[t+d-\ell,t+d-1]}
$$
for all $i = 0,\dots,d$ and, by symmetry of $\Psi$, also
$$
w(t+d)^\top \Psi_{d,i} = w_{[t+d-\ell,t+d-1]}^\top Z_2^\top \begin{bmatrix}
0 \\ C
\end{bmatrix}^\top
$$
for all $i = 0,\dots,d$. By definition, for any $w \in \B$, 
\begin{align*}
Q_\Psi(w)(t) &= \sum_{i,j=0}^d w(t+i)^\top \Psi_{i,j} w(t+j).
\end{align*}
Using the fact that $d \geq \ell$, we conclude that
$$
Q_\Psi(w)(t) = \begin{cases}
    0 & \text{if } d = 0 \\
    \sum_{i,j=0}^{d-1} w(t+i)^\top \Psi'_{i,j} w(t+j) & \text{if } d \geq 1,
\end{cases}
$$
for some suitable coefficient matrices $\Psi'_{i,j}$, where $i,j = 0,\dots,d-1$. We have thus found a QDF of degree at most $d-1$ that is $\B$-equivalent to $Q_\Psi$. Repetition of this argument establishes the theorem.  
\end{proof}

\section{Proof of Theorem~\ref{theoremequivdiss}}
\label{sec:proofstatestorage}

\begin{proof}[Proof of Theorem~\ref{theoremequivdiss}]
The proof follows immediately in the case that $n = 0$. Therefore, we focus on the case that $n \geq 1$, which implies $\ell \geq 1$ due to observability of $(C,A)$. To prove the `only if' statement, assume that \eqref{sys} is dissipative with respect to $s(u,y)$ and let $x^\top P x$ be a storage function. By Lemma~\ref{l:Z1Z2} in~\ref{sec:lemstate}, we may write $x(t)^\top P x(t) = w_{[t,t+\ell-1]}^\top Z_1^\top P Z_1 w_{[t,t+\ell-1]}$. This shows that the matrix $\Psi := Z_1^\top P Z_1 \geq 0$ defines a storage function $Q_\Psi$ for $(\B,Q_S)$. 

To prove the `if' statement, suppose that $\B$ is dissipative with respect to $Q_S$. By Lemma~\ref{l:psdcoefficient}, there exists a $\Psi \in \mathbb{S}^{qd}$ with $\Psi \geq 0$, such that $Q_\Psi$ is a storage function for $(\B,Q_S)$. The proof follows immediately for the case that $d=0$. Therefore, we assume $d\geq 1$. Let $w\in \B$ be an arbitrary trajectory and  note that 
\begin{equation}
\label{storagefuncproof}
    \begin{aligned}
Q_\Psi(w)(t) &= \begin{bmatrix}
    u_{[t,t+d-1]} \\ y_{[t,t+d-1]}
\end{bmatrix}^\top \Pi^\top \Psi \Pi \begin{bmatrix}
    u_{[t,t+d-1]} \\ y_{[t,t+d-1]}
\end{bmatrix} \\
&= \begin{bmatrix}
    x(t) \\ u_{[t,t+d-1]}
\end{bmatrix}^\top \begin{bmatrix}
    0 & I \\ \mathcal{O}_{ d} & \mathcal{T}_{ d}
\end{bmatrix}^\top \Pi^\top \Psi \Pi \begin{bmatrix}
    0 & I \\ \mathcal{O}_{ d} & \mathcal{T}_{ d}
\end{bmatrix} \begin{bmatrix}
    x(t) \\ u_{[t,t+d-1]} 
\end{bmatrix}.
    \end{aligned}
\end{equation}

Now consider a trajectory $w \in \B$ whose corresponding state $x$ satisfies $x(0) = 0$ and, moreover, $u_{[0,d-1]} =0$. Clearly, $Q_\Psi(w)(0) = 0$ and, since $y(0) = 0$, also $Q_S(w)(0) = 0$. Since $\B$ is dissipative with respect to $Q_S$, we have $Q_\Psi(w)(1) - Q_\Psi(w)(0) \leq Q_S(w)(0)$ and thus $Q_\Psi(w)(1) \leq 0$. Since $\Psi \geq 0$, $Q_\Psi(w)(1)=0$ and, hence,
$$
\Psi \Pi \begin{bmatrix}
    0 & I \\ \mathcal{O}_{ d } & \mathcal{T}_{ d }
\end{bmatrix} \begin{bmatrix}
    x(1) \\ u_{[1,d]}
\end{bmatrix} = 0.
$$
\noindent
Since $w \in \B$ is such that the corresponding state $x$ satisfies $x(0) = 0$ and $u_{[0,d-1]}=0$, we have $x(1) = 0$ and
$$
\Psi \Pi \begin{bmatrix}
    I \\ \mathcal{T}_{ d }
\end{bmatrix}
\begin{bmatrix}
    0 \\ \vdots \\ 0 \\ u({ d }) 
\end{bmatrix} = 0
$$
for all $u({ d }) \in \R^m$. We claim that 
\begin{equation}
    \label{claim}
\Psi \Pi \begin{bmatrix}
    I \\ \mathcal{T}_\ell
\end{bmatrix}
\begin{bmatrix}
    u(1) \\ \vdots \\ u({ d }-1) \\ u({ d }) 
\end{bmatrix} = 0
\end{equation}
for all $u(1),\dots,u({ d }) \in \R^m$. We will prove this claim by induction. If ${ d } = 1$ the claim is true by the above discussion. Hence, suppose that ${ d } > 1$. Assume that for a given $1 < k \leq { d }$,
$$
\Psi \Pi \begin{bmatrix}
    I \\ \mathcal{T}_{ d }
\end{bmatrix}
\begin{bmatrix}
    0 \\ \vdots \\0 \\ u(k) \\ \vdots \\ u({ d }) 
\end{bmatrix} = 0,
$$
for all $u(k),\dots,u({ d }) \in \R^m$. We want to prove that also 
\begin{equation}
    \label{toprove}
\Psi \Pi \begin{bmatrix}
    I \\ \mathcal{T}_{ d }
\end{bmatrix}
\begin{bmatrix}
    0 \\ \vdots \\0 \\ u(k-1) \\ \vdots \\ u({ d }) 
\end{bmatrix} = 0,
\end{equation}
for all $u(k-1),\dots,u({ d }) \in \R^m$. Let $w \in \B$ be a trajectory whose corresponding state $x$ satisfies $x(0) = 0$ and whose input component satisfies $u_{[0,k-2]} = 0$. By the induction hypothesis, $Q_\Psi(w)(0) = 0$. Moreover, $Q_S(w)(0) = 0$ because $y(0) = 0$. Therefore, $Q_\Psi(w)(1) \leq 0$ and thus $Q_\Psi(w)(1) = 0$ by positive semidefiniteness of $\Psi$. Note that $x(1) = 0$. We therefore conclude that \eqref{toprove} holds. By induction, \eqref{claim} holds for all $u(1),\dots,u({ d }) \in \R^m$. Equivalently, we have 
$$
\Psi \Pi \begin{bmatrix}
    I \\ \mathcal{T}_{ d }
\end{bmatrix} = 0.
$$
By \eqref{storagefuncproof}, we obtain 
$$
Q_\Psi(w)(t) = 
    x(t)^\top \begin{bmatrix}
    0 \\ \mathcal{O}_{ d }
\end{bmatrix}^\top \Pi^\top \Psi \Pi \begin{bmatrix}
    0 \\ \mathcal{O}_{ d }
\end{bmatrix} 
    x(t).
$$
We conclude that $V(x) = x^\top P x$ is a storage function for \eqref{sys}, with 
$$
P :=  \begin{bmatrix}
    0 \\ \mathcal{O}_{ d }
\end{bmatrix}^\top \Pi^\top \Psi \Pi \begin{bmatrix}
    0 \\ \mathcal{O}_{ d }
\end{bmatrix}  \geq 0. 
$$
Therefore, \eqref{sys} is dissipative with respect to the supply rate $s(u,y)$. This proves the theorem.  
\end{proof}

\section{Controllability of a pair of matrices depending on a scalar variable} \label{sec:lemcont}

In what follows, we present an auxiliary lemma on the controllability of a pair of matrices depending on a scalar variable $\alpha$. 

\begin{lemma}
\label{lemmacontrollability}
    Consider $A \in \mathbb{R}^{n \times n}$ and $B \in \mathbb{R}^{n \times m}$ and assume that $(A,B)$ is controllable. Let $E(\alpha)$ and $F(\alpha)$ be real polynomial matrices in the indeterminate $\alpha  \in \R$, of sizes $n \times n$ and $n \times m$, respectively. Assume that $E(0) = 0$ and $F(0) = 0$. There exists an $\bar{\alpha} >0$ such that $(A+E(\alpha),B+F(\alpha))$ is controllable for all $\alpha \in \mathbb{R}$ satisfying $|\alpha| \geq \bar{\alpha}$.
\end{lemma}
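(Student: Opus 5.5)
The natural route is through the determinant characterization of controllability. Since the controllability matrix is built from products of the entries of $A+E(\alpha)$ and $B+F(\alpha)$, it is itself a polynomial matrix in $\alpha$. Controllability fails only on the zero set of a polynomial, which is finite unless the polynomial vanishes identically.

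Define the polynomial matrix
$$
\mathcal{C}(\alpha) := \begin{bmatrix} (A+E(\alpha))^{n-1}(B+F(\alpha)) & \cdots & (A+E(\alpha))(B+F(\alpha)) & B+F(\alpha)\end{bmatrix} \in \mathbb{R}^{n\times nm},
$$
whose entries are real polynomials in $\alpha$. The pair $(A+E(\alpha),B+F(\alpha))$ is controllable if and only if $\rank \mathcal{C}(\alpha) = n$. Equivalently, the polynomial
$$
p(\alpha) := \det\big(\mathcal{C}(\alpha)\mathcal{C}(\alpha)^\top\big)
$$
is nonzero. This avoids choosing a particular $n\times n$ minor.

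Evaluating at $\alpha=0$ and using $E(0)=0$, $F(0)=0$ gives $\mathcal{C}(0)=\mathcal{C}_n$, the controllability matrix of $(A,B)$. This matrix has rank $n$ by the controllability assumption. Hence $\mathcal{C}_n\mathcal{C}_n^\top>0$ and $p(0)>0$, so $p$ is not the zero polynomial.

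A nonzero real polynomial has finitely many real roots, possibly none. Choose $\bar\alpha>0$ strictly larger than the absolute value of every real root of $p$; if $p$ has no real roots, any $\bar\alpha>0$ works. Then $p(\alpha)\neq 0$ for all $|\alpha|\geq\bar\alpha$, so $\rank\mathcal{C}(\alpha)=n$ and the pair $(A+E(\alpha),B+F(\alpha))$ is controllable there.

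The degenerate case $n=0$ is trivially controllable by the void-matrix conventions. In that case any $\bar\alpha>0$ works.

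The only real obstacle is showing that $p$ is not identically zero, since the lemma is false if it is. This is precisely where the hypotheses $E(0)=0$, $F(0)=0$ and controllability of $(A,B)$ are used. No step beyond this requires more than elementary facts about polynomials.

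In the application in the proof of Theorem~\ref{t:dddiss}, $\hat A_\alpha=A+\alpha\zeta\xi^\top$ and $\hat B_\alpha = B+\alpha\sum_j \hat A_\alpha^j\zeta\eta_j^\top$. These are indeed polynomial in $\alpha$ and vanish as perturbations at $\alpha=0$, so the lemma applies directly.
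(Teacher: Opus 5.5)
Your proof is correct and is essentially the paper's argument: your polynomial $\det(\mathcal{C}(\alpha)\mathcal{C}(\alpha)^\top)$ equals the paper's $\det\bigl(\sum_{i=0}^{n-1}\hat{A}_\alpha^i\hat{B}_\alpha\hat{B}_\alpha^\top(\hat{A}_\alpha^i)^\top\bigr)$. Like the paper, you show it is nonzero by evaluating at $\alpha=0$ and then take $\bar{\alpha}$ beyond its finitely many real roots. Your explicit treatment of the case $n=0$ and of the choice of $\bar{\alpha}$ is a harmless addition.
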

\begin{proof}
  Define $\hat{A}_\alpha := A+E(\alpha)$ and $\hat{B}_\alpha := B+F(\alpha)$. We have that 
$$
\det \left( \sum_{i = 0}^{n-1} \hat{A}_\alpha^i \hat{B}_\alpha \hat{B}_\alpha^\top (\hat{A}_\alpha^i)^\top \right)
$$
is a nonzero polynomial in $\alpha$ since 
$$
\det \left( \sum_{i = 0}^{n-1} \hat{A}_0^i \hat{B}_0 \hat{B}_0^\top (\hat{A}_0^i)^\top \right) = \det \left( \sum_{i = 0}^{n-1} A^i B B^\top (A^i)^\top \right) \neq 0.
$$
As such, $(\hat{A}_\alpha, \hat{B}_\alpha)$ is uncontrollable only for a finite number of values of $\alpha$. Therefore, there exists an $\bar{\alpha} >0$ such that $(A+E(\alpha),B+F(\alpha))$ is controllable for all $|\alpha| \geq \bar{\alpha}$.
\end{proof}

\bibliographystyle{plain}
\bibliography{ifacconf}

\end{document}